\documentclass[11pt]{article} 
\usepackage{natbib}
\usepackage{graphicx}
\usepackage{adjustbox}
\usepackage[utf8]{inputenc}
\usepackage{amsmath,amsthm,amssymb,cases,booktabs,enumitem,siunitx}
\usepackage{multirow}
\usepackage{setspace}
\usepackage{todonotes, verbatim}
\usepackage[colorlinks,
            citecolor=blue,
            urlcolor=blue,
            linkcolor=blue]{hyperref}
\usepackage{doi}
\usepackage[noend]{algpseudocode}
\usepackage{algorithm}
\usepackage{authblk}

\usepackage{subcaption}

\usepackage[english]{babel}
\usepackage[T1]{fontenc}

\usepackage{color}
\definecolor{linkcol}{rgb}{0,0,0}
\definecolor{citecol}{rgb}{0,0,0.7}

\usepackage{pgfplots}
\pgfplotsset{compat=1.17}

\theoremstyle{plain}
\newtheorem{thm}{Theorem}
\newtheorem{proposition}[thm]{Proposition}

\theoremstyle{definition}
\newtheorem{definition}[thm]{Definition}

\newtheorem{example}[thm]{Example}

\def\C{\mathcal{C}}

\newcommand{\RCFP}{$\mathbb{R}^2$-CFP}
\newcommand{\rCFP}{$\mathbb{R}$-CFP}
\newcommand{\PWL}{piecewise linear}
\newcommand{\CFP}{Corridor Fitting Problem}
\newcommand{\RmCFP}{$\mathbb{R}^m$-CFP}

\newcommand{\iLinA}[1]{iLinA2D}

\newcommand{\define}{\mathrel{{\mathop:}{=}}}

\def\restriction#1#2{\mathchoice
              {\setbox1\hbox{${\displaystyle #1}_{\scriptstyle #2}$}
              \restrictionaux{#1}{#2}}
              {\setbox1\hbox{${\textstyle #1}_{\scriptstyle #2}$}
              \restrictionaux{#1}{#2}}
              {\setbox1\hbox{${\scriptstyle #1}_{\scriptscriptstyle #2}$}
              \restrictionaux{#1}{#2}}
              {\setbox1\hbox{${\scriptscriptstyle #1}_{\scriptscriptstyle #2}$}
              \restrictionaux{#1}{#2}}}
\def\restrictionaux#1#2{{#1\,\smash{\vrule height .8\ht1 depth .85\dp1}}_{\,#2}} 

\newenvironment{lcases}
  {\left\lbrace\begin{aligned}}
  {\end{aligned}\right.}

\allowdisplaybreaks

\newcommand{\relaxRonestrong}{($\mathcal{R}^{\textrm{DCFP}}$)}
\newcommand{\relaxRonestrongdecision}{($\mathcal{D}^{\textrm{DCFP}}$)}
\newcommand{\algmilp}{($\mathcal{A}^{\textrm{DCFP}}$)}
\newcommand{\relaxRone}{($\mathcal{R}^{\textrm{color}}$)}
\newcommand{\algcolor}{($\mathcal{A}^{\textrm{color}}$)}
\newcommand{\relmaximum}{($\mathcal{R}^{\textrm{MumC}}$)}
\newcommand{\algmaximum}{($\mathcal{A}^{\textrm{MumC}}$)}
\newcommand{\relmaximal}{($\mathcal{R}^{\textrm{MalC}}$)}
\newcommand{\algmaximal}{($\mathcal{A}^{\textrm{MalC}}$)}

\newcommand{\relaxR}{$(\mathcal{R})$}

\author[1]{Aloïs Duguet \thanks{Corresponding author: duguet@uni-trier.de}}
\author[2]{Sandra Ulrich Ngueveu}
\author[2]{François Lamothe}

\affil[1]{Trier University, Department of Mathematics, Universitätsring 15, Trier 54296, Germany}
\affil[2]{Université de Toulouse, INP, LAAS-CNRS, Toulouse, France}

\title{On the Minimum Number of Linear Pieces Required to Approximate Nonlinear Functions under an Accuracy Constraint}

\date{\today}

\begin{document}
\maketitle

\begin{abstract}

The approximation of nonlinear functions by \PWL{} functions is a tool commonly used when dealing with mixed-integer nonlinear problems. Typically, by replacing nonlinearities by \PWL{} functions one can transform the problem into a mixed-integer linear problem, which may be substantially easier to solve. 
However, using approximate functions can produce solutions that are infeasible for the original problem or far from optimal. To control these errors it is useful to bound the error created during the function approximation process. Moreover, obtaining a \PWL{} function with few pieces usually results in an easier to solve mixed-integer linear problem. 
This leads us to study the Corridor Fitting Problem. It consists in building a \PWL{} function with the minimum number of pieces which approximates a nonlinear function given a bound on the approximation error on each point of the domain.

The Corridor Fitting Problem has primarily been addressed for univariate functions or via heuristic approaches for multivariate functions. Notably, for the latter setting, no exact algorithms or established relaxations are currently known. In this work, we explore this aspect and propose exploitable relaxations of the Corridor Fitting Problem in $\mathbb{R}^m$ based on a discretization of the domain. We show that a structure of hypergraph coloring problem is induced by the discretization of the domain. We define four relaxations making use of this hypergraph coloring problem. We provide new best upper bounds for the classical instance set in $\mathbb R^2$ and we derive the first lower bounds for these instances, closing more than a third of the instances from the literature.
\end{abstract}

\noindent\textbf{Keywords:}
Piecewise linear approximation; Mixed-integer optimization; Corridor Fitting Problem; Multivariate function; Relaxations; Lower/Upper bounds.

\section{Introduction}

Mixed-integer nonlinear programming (MINLP) is a wide class of optimization problems capable of modeling a large variety of real-world applications \citep{Borghetti08,Medeiros22,Hante20gas}, but the presence of nonlinear functions and discrete variables \citep{Belotti13} make them hard to solve in the general case. A common solution method consists in replacing each nonlinear function in the problem by a piecewise linear function \citep{Geissler12} to obtain a mixed-integer linear program (MILP) usually faster to solve than the original MINLP. However, replacing the nonlinear functions by \PWL{} functions without taking precautions results in solutions that can (i) be infeasible for the original problem, and (ii) have an optimal value unreasonably far from the optimal value of the original problem. 
Using piecewise linear functions satisfying a predefined approximation error allows to avoid these drawbacks under some conditions. For example, it is easy to check the following property: if only the objective function is nonlinear and it is replaced by a \PWL{} one at most $\delta$ away from it, then the optimal value of the approximated MILP is also at most $\delta$ away from the optimal value of the original MINLP.

The solution time of MILPs correlates with their size. Additionally, the number of constraints, continuous variables and binary variables in the MILP formulation of a \PWL{} function increases with the number of pieces. Therefore it is beneficial to ensure that the piecewise linear approximation at most $\delta$ away uses the least number of pieces.
The construction of such \PWL{} functions has been the subject of a number of articles which led to the formalization of the \textit{Corridor Fitting Problem} in \citet{Codsi25}. The present work is concerned with the improvement of methods to solve this problem.
For successful applications of \PWL{} approximations to MINLPs, we refer the reader to \citet{Rovatti14} and \citet{Silva14}, for example.

The Corridor Fitting Problem consists in building a \PWL{} function with the minimum number of pieces which approximates a nonlinear function given a predefined bound on the approximation error on each point of the domain. Exact algorithms exist for one-variable functions \citep{Rebennack19,Codsi25,warwicker21}. For two-variable functions exact methods are known only for some special cases such as the Euclidean Norm \citep{Duguet22b}. For the general case of $m$-variable functions, there are no exact methods or even relaxations. 

In this work, we address this gap and propose tailored relaxations for the general case of the Corridor Fitting Problem, then we focus on the two-variable case for which we improve two existing heuristics. The computational experiments are performed on the two-variable case. 

Our main contributions are the following.
\begin{itemize}
    \item We show that a discretization of the domain of the Corridor Fitting Problem induces the structure of a hypergraph coloring problem. Each edge of this hypergraph models that a subset of points of the domain cannot be covered by the same piece of a \PWL{} function that is a feasible solution to the Corridor Fitting Problem.
    \item From the hypergraph coloring structure we derive a hierarchy of four successive relaxations based on hypergraph coloring or clique problems. Each subsequent relaxation discards more of the original problem’s structure to enhance computational tractability.
    \item We implement the lower bounding algorithms for the two-variable case and we establish, for the first time, lower bounds for the classical two-variable instances. Results show the computational usefulness of the hypergraph coloring constraints. More specifically, the more information a relaxation uses, the better the results are on average.
    \item We provide new best feasible solutions for some of the classical instances for two-variable functions. These solutions were achieved by introducing improvements to the existing state-of-the-art heuristics.
    \item Finally, we close 16 out of the 45 instances from the literature.
\end{itemize}

The remainder of the paper is organized as follows. In Section \ref{sec:problem_def_sota} we formally define the Corridor Fitting Problem and provide a literature review. In Section \ref{sec:relaxations}, we introduce four subsequent relaxations of the Corridor Fitting Problem based on a hypergraph coloring problem that emerges from a domain discretization. We explain in Section \ref{sec:compute_edges} how to build the hypergraph coloring problem and how the lower bounding algorithms are derived from the relaxations. We introduce improvements to two state-of-the-art methods for computing feasible solutions of the Corridor Fitting Problem in the two-variable case in Section \ref{sec:new_UB}.
Finally, in Section \ref{sec:experiences_numeriques} we provide numerical results for the computation of lower bounds of the Corridor Fitting Problem in the two-variable case and compare them to the best known upper bounds to establish the optimality of feasible solutions for the first time. Section \ref{s:theconclusion} concludes the paper.

\section{Problem Definition and Literature Review}
\label{sec:problem_def_sota}
In this section, we formally define the problem tackled and provide a literature review.

\subsection{Problem Definition}

We consider a nonlinear function $f: D \subset \mathbb R^m \rightarrow \mathbb R$, with $D$ a polytope. Our goal is to build a \PWL{} function $g$ that satisfies the following constraints:
\begin{align}
    l(x) \leq g(x) \leq u(x) \quad \forall x \in D, \label{def_cont_corridor}
\end{align}
where the functions $u$ and $l$ are defined so that $g$ approximate $f$ to the needs. For example, for $g$ to approximate $f$ with a $\delta$-\textit{absolute error} the constraints \eqref{def_cont_corridor} read:
\begin{align}
    f(x) - \delta \leq g(x) \leq f(x) + \delta \quad \forall x \in D. \label{absolute_error}
\end{align}
Alternatively if $f$ is positive, for $g$ to approximate $f$ with an $\epsilon$-\textit{relative error}, the constraints \eqref{def_cont_corridor} read:
\begin{align}
    (1 - \epsilon) f(x) \leq g(x) \leq (1 + \epsilon) f(x) \quad \forall x \in D. \label{relative_error}
\end{align}

Constraints \eqref{def_cont_corridor} are called \textit{corridor constraints}, since they force the graph of the function $g$ to be inside a \textit{corridor} delimited by $u$ and $l$ as stated in the following definition. 

\begin{definition}[Corridor]
Let $D \subset \mathbb{R}^m$ be a full dimensional polytope and $u,l$ be two continuous functions from $D$ to $\mathbb{R}$ satisfying $l(x) < u(x)$ for all $x \in D$. The \textit{$\mathbb{R}^m$-corridor} $\C$ between $u$ and $l$ of domain $D$ is the set $\{(x,z) \in \mathbb{R}^{m+1} | x \in D,\; l(x) \leq z \leq u(x)\}$ and is denoted $\C = \mathit{Corridor}(u,l,D)$.
\end{definition}

Figure \ref{def_figure_corridors} shows two examples: an $\mathbb R$-corridor and an $\mathbb R^2$-corridor. In the left figure, the corridor is the space in gray in-between the two black curves representing functions $u$ and $l$. In the right figure, the corridor is the subspace of $\mathbb R^3$ in-between the two surfaces representing functions $u$ and $l$.

\begin{figure}[!ht]
\centering
\begin{subfigure}{.45\textwidth}
    \centering
    \includegraphics[width=\linewidth]{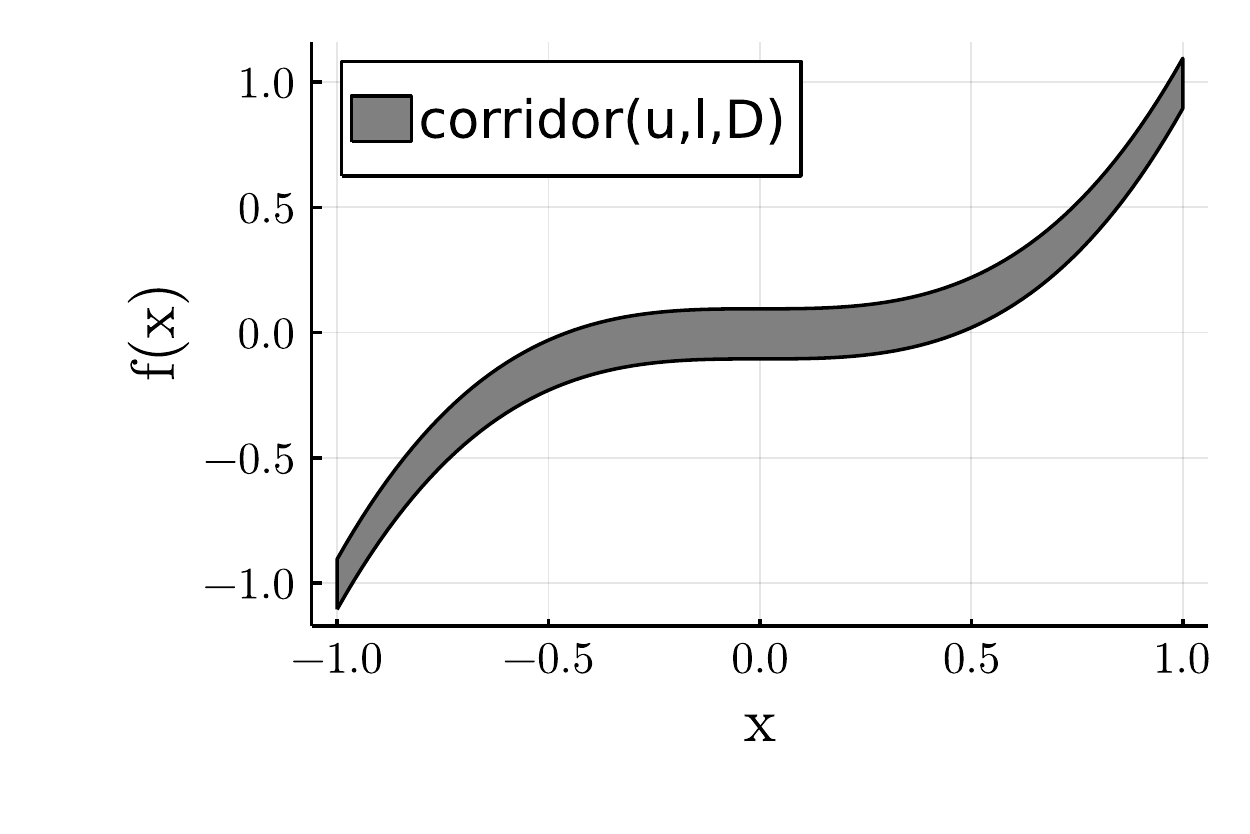}
    \caption{An $\mathbb R$-Corridor}
    \label{def_figure_corridor_1D}
\end{subfigure}%
\begin{subfigure}{.49\textwidth}
    \centering
    \includegraphics[width=\linewidth]{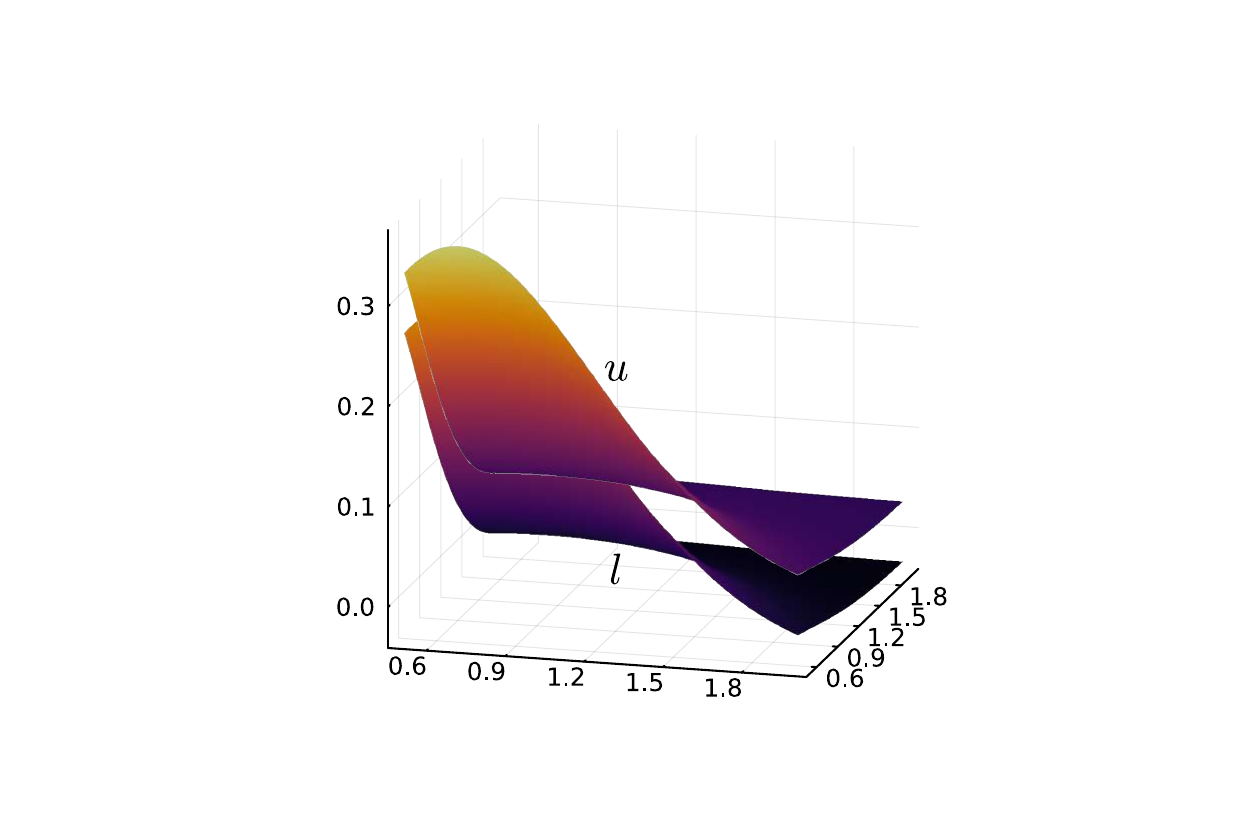}
    \caption{An $\mathbb R^2$-Corridor}
    \label{def_figure_corridor_2D}
\end{subfigure}
\caption{Examples of corridors in dimension one and two}
\label{def_figure_corridors}
\end{figure}

\begin{definition}[\PWL{} function] \label{def_pwl_function}
    Let $D \subset \mathbb R^m$ be a full dimensional polytope, $n$ a strictly positive integer, $(D_i)_{i = 1,...,n}$ a set of full dimensional polytopes of $\mathbb R^m$ covering $D$ and of empty intersection between their interiors.
    Let $(l_i)_{i = 1,...,n}:D_i \mapsto \mathbb R$ be linear functions. 
    We call $g$ such that for $x \in D$:
    \begin{align*}
        &g(x) = 
        \begin{lcases}
            &l_i(x) \quad &\text{if } x \in int(D_i), \\
            &\min_i l_i(x) \quad &\text{if } x \in bd(D_i),
        \end{lcases} \\
    \end{align*}
    a \PWL{} function with $n$ pieces, where $int(A)$ is the interior of the set $A$ and $bd(A)$ the boundary of $A$. The function $g$ is lower semicontinuous due to the minimization. 
    By replacing it by a maximization, $g$ is an upper semicontinuous function. 
\end{definition}
The lower semicontinuity is an important property when formulating a \PWL{} function $g$ in an MILP. Indeed, in this case $epi(g)$ is closed and thus it can be modeled as a union of polyhedra \citep{Vielma10}. 

We now formally introduce the corridor fitting problem, which consists in building a \PWL{} function with the minimum number of pieces for which its graph is included in a corridor $\C=\mathit{Corridor}(u,l,D)$:
\begin{definition}[\CFP{}] \label{def_CFP}
Let $D \in \mathbb R^m$ be a compact and full dimensional polytope and $u$ and $l$ be continuous functions from $D$ to $\mathbb R$ satisfying  $l(x) < u(x)$ for all $x \in D$. 
The problem of finding $g$, a \PWL{} function with $n$ pieces and domain $D$, solution of:

\begin{subequations}
\begin{numcases}{\mathit{(CFP)}\quad} \label{CFP}
        \underset{g}{min} \quad & $n$ \\
         \textit{s.t.} \quad & $l(x) \leq g(x) \leq u(x) \quad \forall x \in D \subset \mathbb{R}^m$ \label{constraint_corridor}
\end{numcases}
\end{subequations}
is called \textit{Corridor Fitting Problem in $\mathbb R^m$} or simply \RmCFP{} or CFP if the $m$ is clear from the context.
\end{definition}
Constraints \eqref{constraint_corridor} can be reformulated using the notion of corridor as follows: 
$$g(x) \in \mathit{Corridor}(u,l,D) \quad \forall x \in D.$$
The \RmCFP{} always admits a solution with a finite number of pieces because a continuous function defined on a compact domain of $\mathbb R^m$ can be approximated to arbitrary precision through a continuous \PWL{} function with a finite number of pieces \citep{Julian98}. However, it is a hard problem because there is an infinite number of constraints in Constraints \eqref{constraint_corridor}.

\subsection{Literature Review} 
\label{subsec:sota}

In this section, we describe the literature for the Corridor Fitting Problem in $\mathbb R$, $\mathbb R^2$, and $\mathbb R^m$ for $m \geq 3$.

\subsubsection{The $\mathbb R$-corridor fitting problem}

Most of the works consider the \rCFP{} with the additional constraint that the \PWL{} function is continuous and the corridor corresponds to an absolute function approximation, i.e., as in Constraints \eqref{absolute_error} \citep{Rebennack15b,Kong20,Rebennack19,Warwicker25}. For this particular case, the exact algorithms proposed are based on the same principle: compute the solution $p^*$ to a discretized version of the problem, i.e., by considering the corridor constraints only on a finite set of points of the domain, and then check if $p^*$ is also a feasible, thus optimal, solution to the problem with all the corridor constraints. If it is not feasible, then new corridor constraints on the points with the worst error are added. If it is optimal, then the algorithm stops with the optimal solution $p^*$. The key differences between the algorithms from the literature lie in the formulation and solution of the discretized problem solved at each iteration. Contrary to pre-existing exact algorithms that solved a MILP or a nonconvex NLP at each iteration \citep{Rebennack19,Kong20,Rebennack15b}, \citet{Warwicker25} uses the algorithm from \citet{Hiroshi86pwl} that has a linear complexity in the number of corridor constraints to satisfy. This resulted in more instances solved to optimality and a significant speedup with a computation time at least an order of magnitude lower.
Note that the discretized version of the \rCFP{} is equivalent to the problem of \PWL{} fitting of discrete data points \citep{Ploussard23}.

For the general case of the \rCFP{}, i.e to find an exact solution for (i) non-necessarily continuous (nnc) univariate \PWL{} functions and (ii) the corridor is not required to correspond to an absolute function approximation, \citet{Codsi25}, who introduced the term \emph{corridor}, proposes an algorithm in quasi-logarithmic time. It performs a dichotomy search on the domain for functions $u, l$ that do not present changes in concavity (i.e. that are either convex or concave). For general functions with changes in concavity, the authors show that one can benefit from the dichotomy search for most of the procedure, and execute the more expensive data fitting method only around the inflection points. The proposed method, LinA, demonstrates a substantial improvement in computational efficiency compared to the state-of-the-art, solving all benchmark instances in a negligible computation time. 

\subsubsection{The $\mathbb R^2$-corridor fitting problem}
\label{subsubsec:stateofartR2cfp}

No exact method exist for solving the $\mathbb R^2$-corridor fitting problem in the general case. Exact algorithms are known for some special cases such as the relative approximation \eqref{relative_error} of the euclidean norm on the domain $\mathbb R^2$ \citep{Duguet22b} and approximation heuristics are known for special cases such as the absolute approximation \eqref{absolute_error} of the nonlinear function $f(x,y) = xy$ with additional constraints on the piecewise linear function \citep{Barmann23}. For the general case, heuristics have been proposed \citep{Rebennack15a,Kazda21,Duguet22a,Kazda23} for which, to the best of our knowledge, there is no published work quantifying the deviation from the optimum, of number of pieces of the approximations obtained. 
We now proceed to describe those heuristics, which will be used as a benchmark to evaluate the lower and upper bounds we propose for the $\mathbb R^{m\geq 2}$-corridor fitting problem. All of them are developed for the absolute approximation case \eqref{absolute_error}, but most of the theory can be adapted to the general case of the \RCFP{} with a straightforward change.

\citet{Rebennack15a} describes two heuristic algorithms. The first is capable of producing continuous or nnc \PWL{} functions while the second can only produce continuous \PWL{} functions. The first heuristic denoted RK1D can be used if the contribution of the two variables in the function can be separated in two one-variable functions, be it linearly or nonlinearly. In this case, an algorithm finds the two optimal continuous one-variable PWL functions and combines them to build a single two-variable PWL function. This approach can be efficient for linearly separable functions, but is much less efficient for other functions. It generates \PWL{} functions where the domain of each piece is rectangular.

The second heuristic of \citet{Rebennack15a}, denoted RK2D, produces \PWL{} functions for which the domain of the pieces is a triangulation. The method initializes by dividing the rectangular domain by its diagonal into two triangles. Then, for each of these triangles, the algorithm solves an NLP to search for a linear function that respects the approximation error. This function is sought among a finite number of linear functions corresponding to the convex combination of the three vertices of the triangle of the form $(x_1, x_2, f(x_1, x_2) + s)$ where $s$ is a value in $[-\delta, \delta]$ representing a shift relative to the value of the function. If a solution to the NLP is found, the corresponding function is added to the PWL under construction. Otherwise, the triangular domain is subdivided. RK2D considers two possible refinement procedures: subdivision into three triangles around the point where the approximation error is maximum, or subdivision into four triangles, known in the literature as red refinement. Once the new domains have been obtained, RK2D processes them separately, searching for each one, if there is a linear function respecting the approximation error on the domain. This heuristic has the drawback of not being parsimonious with the number of pieces of the solution, as triangles are split into at least 3 triangles. 

\citet{Kazda21} proposes a different approach, which we refer to as KL21. Unlike RK2D, which generates linear pieces iteratively, KL21 computes a complete piecewise linear function over the entire domain at each iteration by solving a MILP that verifies approximation errors at a finite set of points in the domain of the nonlinear function. Then, two NLPs are solved for each linear piece to compute the points of maximum difference between the \PWL{} function and the nonlinear function. It is used to verify whether the approximation error is respected at every point of the continuous domain or not. If yes the algorithm stops. Otherwise, a new iteration of the algorithm is executed, which consists of adding new points to the set of points, and solving the resulting MILP to produce a new \PWL{} function. Another specificity of the heuristic of \citet{Kazda21} is that the desired \PWL{} function $g$ is continuous and is modeled within the MILP as a \textit{difference of convex continuous piecewise linear function} $g(x) = g^+(x) - g^-(x)$. As a result, the shape of each piece can be any convex polygon, not only triangles, which can lead to fewer pieces. However, the increasing size of the MILP problems to be solved imposes a significant limitation on the use of the heuristic for medium-sized or large instances. 

In \citet{Kazda23}, the KL21 method is improved with the aim of reducing computation time, at the expense of the number of pieces of the PWL solution functions. To achieve this, the iteration for solving MILP problems is replaced by a two-step iteration: the first step attempts to find a good assignment of the points chosen from the domain to a finite set of linear functions by solving LP problems, and the second step adds linear functions to the convex or concave component $g^+(x)$ and $g^-(x)$. We call this method \textit{KL23-LP}. A variant of this method is described in the same article, which we call \textit{KL23-LPrelaxed}. It handles some of the constraints of the LP model as lazy constraints, i.e., adding them to the LP model only when the previous solution does not satisfy them, since these LP problems require a large number of constraints. The numerical results on the set of instances from \citet{Rebennack15a} show that the KL23-LP and KL23-LPrelaxed methods can find feasible approximations to much larger instances than the KL21 method. The number of pieces in the constructed PWL functions is significantly smaller than for the RK2D method, but when KL21 finds a solution, it generally has fewer pieces than the solutions of the KL23-LP and KL23-LPrelaxed methods.

\citet{Duguet22a} proposes three principles based on which a heuristic producing high quality solutions for the \RCFP{} is designed: (i) General Piecewise Linear Representation: the algorithm constructs non necessarily continuous \PWL{} functions by partitioning the corridor domain into convex polytopes, each associated with a linear function that satisfies the corridor constraints on the polytope. This approach relaxes the continuity constraints imposed by KL21~/~KL23-LP~/~KL23-LPrelaxed, and generalizes beyond the simplex or rectangular partitions used in RK1D~/~RK2D. (ii) Greedy Polytope Selection: the algorithm constructs a linear piece at each iteration, defined over the largest possible polytope within the remaining domain, as determined by a specified metric. This subdomain is then excluded from further consideration. The process repeats until the entire corridor domain is covered. \citet{Duguet22a} show that a metric based on second partial derivatives results in fewer linear pieces than one based on domain area alone. (iii) Interior PWL Corridor Approximation: the corridor is approximated using an interior PWL corridor, allowing the construction of each linear piece to be formulated as a linear program. This is achieved by replacing the original corridor bounds $l$ and $u$ with two \PWL{} functions $\tilde{l}$ and $\tilde{u}$ that verify $l \leq \tilde{l} < \tilde{u} \leq  u$.

\subsubsection{The $\mathbb R^m$-corridor fitting problem}
All methods described for the \RCFP{} are adaptable to $\mathbb R^m$ for $m \geq 3$. To the best of our knowledge, the only known numerical results concern the particular case of the function $\prod_{i=1,...,m} x_i$ on domain $D=[0,1]^m$ for $m=3,4,5$ by methods \textit{KL23-LP} and \textit{KL23-LPrelaxed} in \citet{Kazda23}.

\section{Relaxations of the \RmCFP{}} \label{sec:relaxations}

The main difficulty in solving the \RmCFP{} lies in its infinite number of constraints. We propose four different relaxations involving only a finite number of constraints. They are based on a hypergraph coloring problem resulting from the discretization of the domain $D$ of $\mathit{Corridor}(u,l,D)$ and the identification of \textit{linearly compatible sets} defined hereafter.

\begin{definition}
(linearly compatible set). Given $\mathit{Corridor}(u,l,D)$, a finite set of points $s \subset D$ is \textit{linearly compatible} if there exists a linear function that is solution of the \RmCFP{} on $\mathit{Corridor}(u,l,\textit{conv}(s))$ where $\textit{conv}(s)$ is the convex hull of points in $s$. If there exists no such linear function, the set is called \textit{not linearly compatible}.
\end{definition}

A not linearly compatible set corresponds to a valid inequality for the CFP on $\mathit{Corridor}(u,l,D)$.
\begin{figure}
    \centering
    \includegraphics[width=0.5\linewidth]{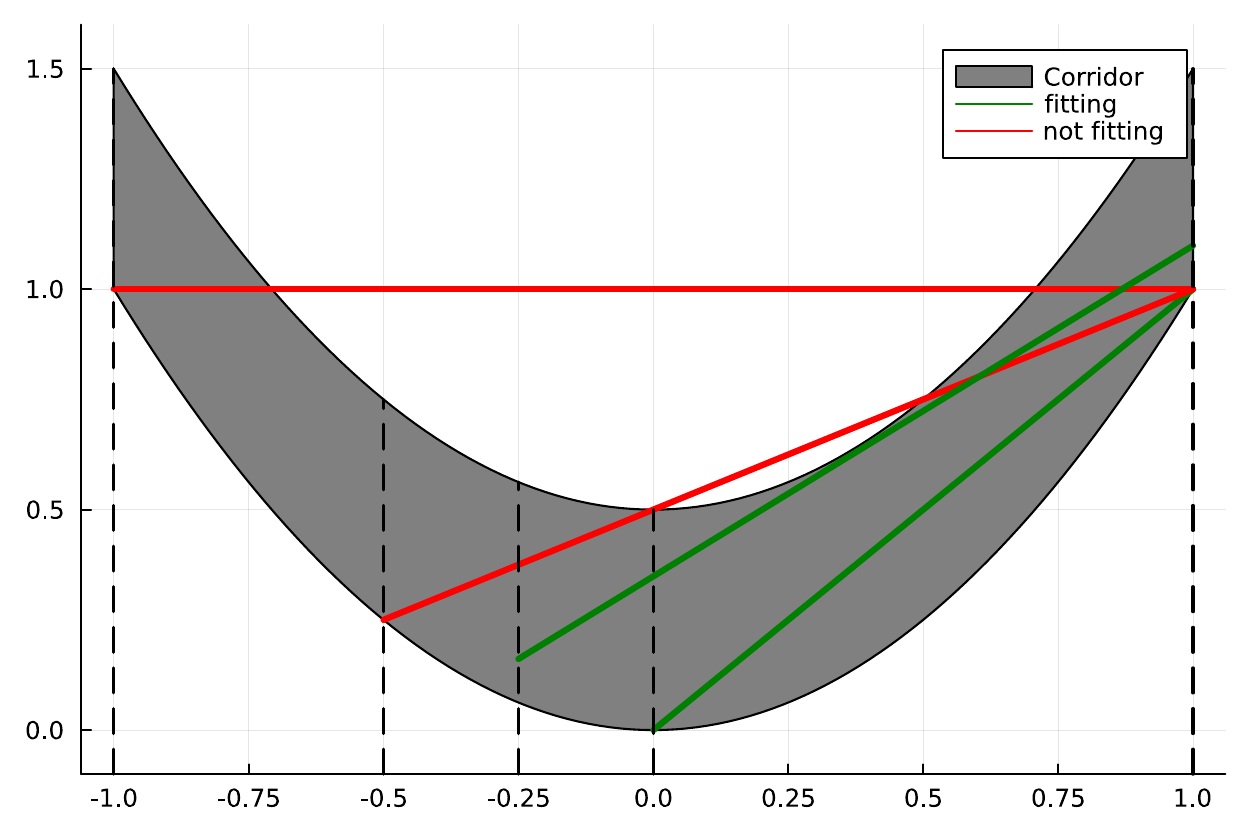}
    \caption{Examples of linear functions fitting and not fitting a corridor on $[-1,1]$ leading to linearly and not linearly compatible sets}
    \label{figure_example_linearly_compatible_set}
\end{figure}

\begin{example}
Consider the corridor $\mathcal{C}_0 = \mathit{Corridor}(x^2+\frac{1}{2},x^2,[-1,1])$ as well as the green and red linear functions fitting or not fitting $\mathcal{C}_0$ showed in Figure \ref{figure_example_linearly_compatible_set}.
The set $\{\{0\},\{1\}\}$ is linearly compatible because the linear function $x \mapsto x$ on $[0,1]$ is inside $\mathcal{C}_0$ on $[0,1]$, see the corresponding green line.
However, it is easy to see that there is no linear function solution to the \rCFP{} on corridor $\mathcal{C}_0$. Thus, in every solution of an \rCFP{} on a corridor that has a restriction to domain $[-1,1]$ equal to $\mathcal{C}_0$, the points $-1$ and $1$ are assigned to different linear pieces. Thus the set $\{\{-1\},\{1\}\}$ is a not linearly compatible set. Finally, for similar arguments, $\{\{-0.5\},\{1\}\}$ is a not linearly compatible set while $\{\{-0.25\},\{1\}\}$ is a linearly compatible set.
\end{example}

\subsection{Domain discretization, linearly compatible sets and resulting relaxation}
We introduce a relaxation of the \RmCFP{} based on the so-called \textit{domain discretization}.

\begin{definition}[Problem \relaxR{}]
Let corridor $\mathcal{C}$ be a corridor and $X\subset D$ be a finite set of points in the corridor domain. 
We denote \relaxR~the problem of partitioning $X$ into a minimum number of subsets subject to each subset being linearly compatible.
\end{definition}

\begin{proposition} \label{prop_relaxR}
    Consider a corridor $\C=\mathit{Corridor}(u,l,D)$ and a finite set $X \subset D$. The optimal solution of \relaxR~provides a lower bound to the \RmCFP{} on the corridor $\C$.
\end{proposition}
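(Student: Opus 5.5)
The plan is to take an optimal solution of the \RmCFP{} on $\C$ and build from it a feasible partition of $X$ into linearly compatible subsets with no more subsets than the solution has pieces. Optimality of \relaxR{} then gives the bound. Concretely, let $g$ be an optimal \PWL{} function with $n^*$ pieces, domains $(D_i)_{i=1,\dots,n^*}$ and linear functions $(l_i)$ as in Definition \ref{def_pwl_function}. I will assign each point $x \in X$ to one index $i(x)$ with $x \in D_i$ and $g(x) = l_i(x)$. For the partition I set $X_i = \{x \in X : i(x) = i\}$ and drop the empty sets. This gives at most $n^*$ subsets covering $X$, pairwise disjoint by construction.

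The assignment is well defined. If $x \in int(D_i)$ for some $i$, then $g(x)=l_i(x)$ by definition. Otherwise $x$ lies on the boundary of some pieces, and $g(x)=\min_j l_j(x)$ over the pieces containing $x$; I take $i(x)$ to be a minimizing index, breaking ties by smallest index. Reading the minimum as taken over pieces whose domain contains $x$ is the natural interpretation of the definition.

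The key step is to show that each $X_i$ is linearly compatible, with $l_i$ as the witness. Since $D_i$ is a polytope, hence convex, and $X_i \subset D_i$, we have $conv(X_i) \subset D_i$. So it suffices to prove $l(y) \le l_i(y) \le u(y)$ for every $y \in D_i$.

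This is the main obstacle. Feasibility of $g$ only gives $l \le g \le u$, and $g$ may differ from $l_i$ on $bd(D_i)$. On $int(D_i)$ we have $g = l_i$, so the corridor inequalities hold there. For a boundary point $y \in D_i$, I use that $D_i$ is full dimensional and convex, so $D_i = \overline{int(D_i)}$ and $y$ is a limit of interior points $y_k$. Then $l(y_k) \le l_i(y_k) \le u(y_k)$ for all $k$. Since $l$, $u$ and $l_i$ are continuous, passing to the limit gives $l(y) \le l_i(y) \le u(y)$. Hence $l_i$ solves the \RmCFP{} on $\mathit{Corridor}(u,l,conv(X_i))$ with one piece, and $X_i$ is linearly compatible.

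Combining these steps, $\{X_i\}$ is feasible for \relaxR{} with at most $n^*$ parts. Therefore the optimum of \relaxR{} is at most $n^*$, which proves the proposition. It also follows that the optimal value of \relaxR{} is at most the \RmCFP{} optimum for every finite $X$.
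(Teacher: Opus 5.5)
Your proposal is correct and follows the same route as the paper: color each point of $X$ by a piece of an optimal \PWL{} solution containing it, and observe that each color class is linearly compatible. Your version is more careful than the paper's in two places. The closure argument (which implicitly takes the pieces to lie inside $D$) proves that $l_i$ fits the corridor on all of $D_i \supseteq conv(X_i)$, which the paper only asserts. Dropping empty classes also correctly gives ``at most $n^*$'' colors, where the paper says ``the same'' number.
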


\begin{proof}
It suffices to show that to any optimal solution of the \RmCFP{}, there corresponds a feasible solution of \relaxR{}, and that the objective value of the two solutions are the same.
Let $g^{\mathrm{CFP}}$ be a piecewise linear function solution of the \RmCFP{}. We now build a feasible solution of problem \relaxR{} from $g^{\mathrm{CFP}}$. Associate to each point $x_i$ of $X$ a color $j$ corresponding to the piece $j$ of $g^{\mathrm{CFP}}$ whose domain contains $x_i$. 
The subsets of $X$ of one color are linearly compatible sets because they come from a feasible solution to the \RmCFP{}. Thus this association of colors to points of $X$ represents a feasible solution of \relaxR{}.
In addition, the number of colors used by this feasible solution is the same as the number of pieces of $g^{\mathrm{CFP}}$.
\end{proof}

\paragraph{Hypergraph coloring} 
Hypergraphs are a generalization of graphs where edges can connect more than two vertices. They should not be confused with multigraphs, which are graphs where several edges can connect the same pair of vertices. 
Let us consider a hypergraph $(V,E)$ and an integer $n>0$. An $n$-coloring of the vertices of $(V,E)$ is a function $c: V \mapsto \{1,...,n\}$ assigning a color to each vertex such that not all vertices of an edge $e \in E$ have the same color.
Figure \ref{figure_hypergraph_coloring} shows a 3-coloring for the hypergraph represented. Note that the three vertices connected by the yellow edge are not all of the same color.
\begin{figure}
    \centering
    \includegraphics[width=0.5\textwidth]{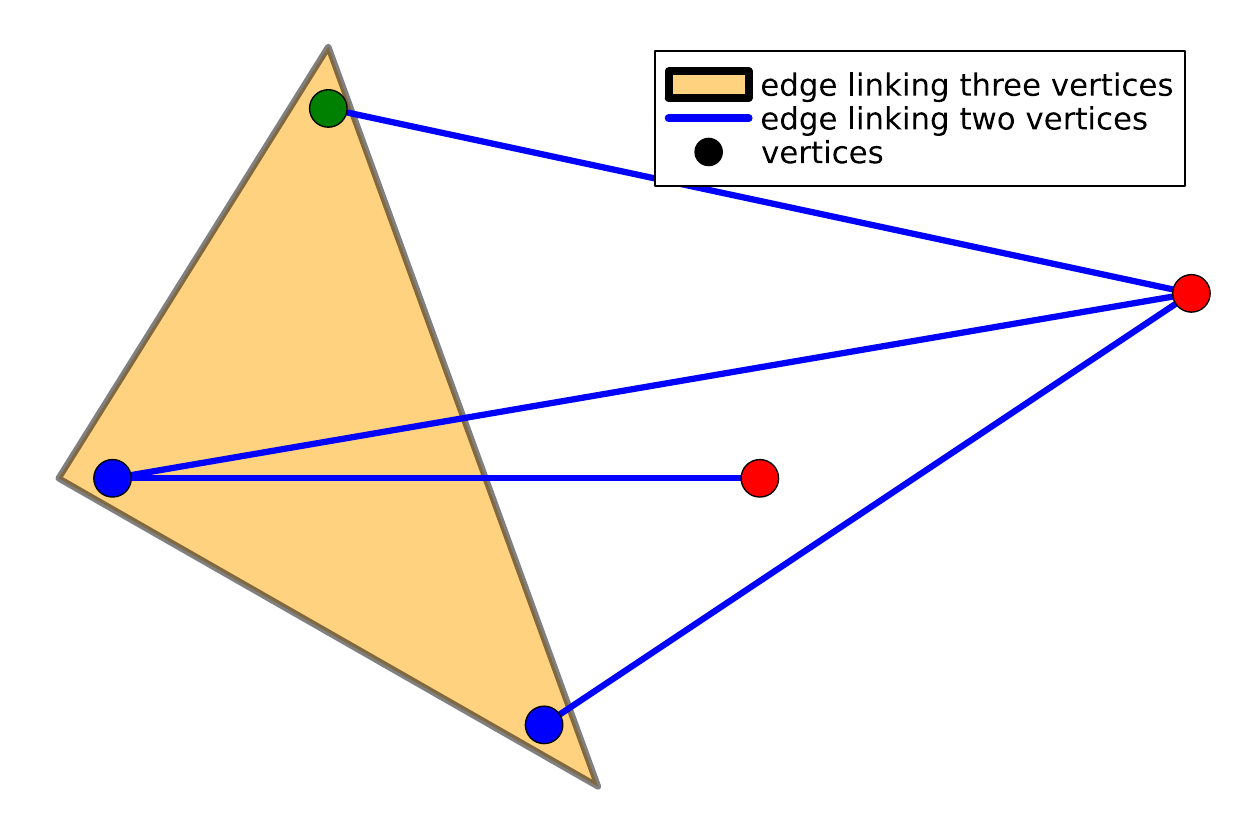}
    \caption{An example of hypergraph with a 3-coloring}
    \label{figure_hypergraph_coloring}
\end{figure}

\begin{definition}[Hypergraph coloring problem]
Let $H$ be a hypergraph and $n$ a positive integer. The problem of finding an $n$-coloring to hypergraph $H$ is called the \textit{hypergraph coloring problem}.
\end{definition}

More information on the hypergraph coloring problem can be found in \citet{Bretto13}.
Problem \relaxR{} on corridor $\mathcal{C}$ can be formulated and solved as a hypergraph coloring problem where hypergraph $H=(X,E)$ is defined as follows. The set of vertices $X$ is a finite set of points of the domain of $\mathcal{C}$.
There is one hyperedge $e \in E$ for each not linearly compatible subset of $X$.
We call such hypergraph a \textit{complete incompatibility hypergraph} of $X$ and each of its hyperedges an \textit{incompatibility edge} for $\mathcal{C}$ because it is built from a not linearly compatible set.

The number of edges of $E$ grows exponentially with $|X|$.
Moreover, determining whether a given subset of $X$ is linearly incompatible amounts to solving a decision variant of the \RmCFP{} to determine whether a feasible solution exists that uses only a single piece. Consequently, \relaxR{} is intractable except for very small sets $X$. The remaining of the section presents more tractable relaxations.

\subsection{\relaxRone~: hypergraph coloring of partial incompatibility hypergraphs}

We call \textit{incompatibility hypergraph} $H=(X,E)$, a hypergraph containing only edges that correspond to not linearly compatible sets. In particular, it can be obtained by removing some edges from a complete incompatibility hypergraph.
\begin{definition}\relaxRone{}~
Let $\mathcal{C}$ be a corridor, $X\subset D$ be a finite set of points in the corridor domain and $H=(X,E)$ be an incompatibility hypergraph for corridor $\mathcal{C}$. Problem \relaxRone{} is the hypergraph coloring problem applied to $H$.
\end{definition}
 
As \relaxRone{} is built from \relaxR{} by dropping some constraints, it is a relaxation of \relaxR{} and thus the following proposition holds:

\begin{proposition} \label{prop_relaxRone}
    Consider a corridor $\C=\mathit{Corridor}(u,l,D)$, a finite set $X \subset D$ and an incompatibility hypergraph $H=(X,E)$ for $\C$. An optimal value of \relaxRone{} provides a lower bound to the \RmCFP{} on the corridor $\C$.
\end{proposition}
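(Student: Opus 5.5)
We argue by chaining two inequalities: the optimal value of \relaxRone{} is at most that of \relaxR{} on the same set $X$, which in turn is at most the optimal value of the \RmCFP{} on $\C$ by Proposition~\ref{prop_relaxR}. Denoting by $\mathrm{OPT}$ the optimal value of each problem, we want
\[
\mathrm{OPT}(\mathcal{R}^{\textrm{color}}) \le \mathrm{OPT}(\mathcal{R}) \le \mathrm{OPT}(\mathrm{CFP}),
\]
so only the first inequality needs proof.

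For the first inequality, take an optimal solution of \relaxR{}: a partition of $X$ into $n$ linearly compatible subsets $X_1,\dots,X_n$. Define the coloring $c(x)=j$ for $x\in X_j$; it uses $n$ colors. We check that $c$ is an $n$-coloring of $H=(X,E)$. Let $e\in E$ and suppose all vertices of $e$ receive the same color $j$, so $e\subseteq X_j$.

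By definition of an incompatibility hypergraph, $e$ is not linearly compatible. On the other hand, $X_j$ is linearly compatible, so there is a linear function $\ell$ with $l\le \ell\le u$ on $\mathit{conv}(X_j)$. Since $e\subseteq X_j$ implies $\mathit{conv}(e)\subseteq \mathit{conv}(X_j)$, the same $\ell$ satisfies the corridor constraints on $\mathit{conv}(e)$. Hence $e$ is linearly compatible, a contradiction. This monotonicity of linear compatibility under taking subsets is the only substantive point, and it follows directly from inclusion of convex hulls.

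Therefore $c$ is feasible for \relaxRone{} with $n$ colors, and the optimal value of \relaxRone{} is at most $n=\mathrm{OPT}(\mathcal{R})$. Combining this with Proposition~\ref{prop_relaxR} gives the claim.

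Alternatively, we can bypass \relaxR{}: color the points of $X$ by the piece of an optimal CFP solution $g$ whose domain contains them, as in the proof of Proposition~\ref{prop_relaxR}. Each color class lies in a single convex piece domain $D_j$, on which $l_j$ fits the corridor, so each class is linearly compatible. The subset argument above then shows that no incompatibility edge is monochromatic.

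One subtlety concerns points on the boundary of several $D_j$, where $g$ takes the minimum of the adjacent linear functions. Each such point is assigned to one piece $j$ containing it. The bound $l\le l_j\le u$ holds on the interior of $D_j$ by definition of $g$, and it extends to all of $D_j$ by continuity of $l$, $u$ and $l_j$. So the chosen piece still fits the corridor at that point, and the argument goes through.
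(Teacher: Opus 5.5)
Your proof is correct and is essentially the paper's argument: the paper observes that \relaxRone{} is obtained from \relaxR{} by dropping constraints, keeping only the hyperedges of $H$ rather than all not linearly compatible subsets, and then invokes Proposition~\ref{prop_relaxR}. Your check that a partition into linearly compatible classes leaves no edge of $H$ monochromatic, via $\mathit{conv}(e)\subseteq\mathit{conv}(X_j)$, is the subset-monotonicity fact the paper leaves implicit when it treats \relaxR{} as a hypergraph coloring problem on the complete incompatibility hypergraph.
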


Section \ref{sec:compute_edges} presents the different procedures to generate an incompatibility hypergraph $H$.

\subsection{\relaxRonestrong{} : a discrete variant of the \textit{Corridor Fitting Problem}}

We discuss another problem built from \relaxRone{} and additional constraints corresponding to valid inequalities of the \RmCFP{}.
\begin{definition} \relaxRonestrong{}
Given a corridor $\C=\mathit{Corridor}(u,l,D)$, a finite set $X \subset D$ and an incompatibility hypergraph $H=(X,E)$ for $\C$, we denote \relaxRonestrong{} the problem of finding a feasible solution of \relaxRone{} that minimizes the number of colors while also satisfying the three following constraints. 
\begin{enumerate}[label=(\roman*)]
\item\label{constraint_valid_inequalities} Each color $j$ is associated with a linear function 
    \begin{align}
        L_j(x) = a_j x + b_j, & \quad a_j \in \mathbb R^m, b_j \in \mathbb R \nonumber
    \end{align}
    defined on domain
        \begin{align}
        D_j=conv(\{x_i \in X | x_i \text{ is of color } j\} \nonumber
    \end{align}
\item\label{constraint_non_intersecting_domains} The interiors of the domains $D_j$ do not intersect
\item\label{constraint_def_discrete_pointwise} Each point $x_i \in X$ such that color $j$ is assigned to $x_i$ verifies the corridor constraint:
    \begin{align}
        l(x_i) \leq L_j(x_i) \leq u(x_i). \nonumber
    \end{align}
\end{enumerate}
\end{definition}
\relaxRonestrong{} can be seen as a discrete version of the \textit{Corridor Fitting Problem}. 
A feasible solution of \relaxRonestrong{} is ``closer to a \PWL{} function fitting the corridor $\C$'' than a feasible solution of \relaxRone{} because it defines $n$ linear functions $L_j$ (Constraint \ref{constraint_valid_inequalities}), the interior of the domains of those linear functions do not intersect (Constraint \ref{constraint_non_intersecting_domains}), and $L_j$ verifies the corridor constraints on $X$ (Constraint \ref{constraint_def_discrete_pointwise}).

\begin{proposition} \label{prop_relaxation}
    Consider a corridor $\C=\mathit{Corridor}(u,l,D)$, a finite set $X \subset D$ and an incompatibility hypergraph $H=(X,E)$ for $\C$. The optimal solution of \relaxRonestrong{} provides a lower bound to the \RmCFP{} on the corridor $\C$.
\end{proposition}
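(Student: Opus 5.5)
Following the proof of Proposition \ref{prop_relaxR}, I will take an optimal solution $g^{\mathrm{CFP}}$ of the \RmCFP{} on $\C$, with $n$ pieces, polytopes $(D_i)_{i=1,\dots,n}$ and linear functions $(l_i)_{i=1,\dots,n}$. I will turn it into a feasible solution of \relaxRonestrong{} that uses at most $n$ colors. The optimum of \relaxRonestrong{} is then at most $n$, which is the claimed lower bound.

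\textbf{Coloring.} Each $x \in X$ gets a color $j$ chosen among the pieces whose polytope contains $x$. The choice must respect the function value: if $x \in int(D_j)$ the piece is unique. If $x$ lies on a boundary, I choose $j$ attaining $\min_i l_i(x)$ among the polytopes containing $x$, so that $g^{\mathrm{CFP}}(x)=l_j(x)$. Colors left unused are discarded, so at most $n$ colors appear.

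\textbf{Constraints.} For each color $j$ I set $L_j=l_j$, which gives \ref{constraint_valid_inequalities}.
\begin{itemize}[label={}]
\item \emph{Constraint \ref{constraint_def_discrete_pointwise}:} $L_j(x_i)=g^{\mathrm{CFP}}(x_i)\in[l(x_i),u(x_i)]$ by feasibility of $g^{\mathrm{CFP}}$.
\item \emph{Feasibility for \relaxRone{}:} each edge $e\in E$ is a not linearly compatible set. If all points of $e$ had color $j$, then $conv(e)\subseteq D_j$ because $D_j$ is convex. On $D_j$, however, $g^{\mathrm{CFP}}$ equals $l_j$ only on the interior, since boundary values are minima over several pieces. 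So I must check that $l_j$ stays in the corridor on all of $D_j$. This holds by continuity: $l_j$, $l$ and $u$ are continuous, $l\le l_j\le u$ on $int(D_j)$, and $D_j$ is full dimensional, hence $D_j=\overline{int(D_j)}$. Therefore $l_j$ fits $\mathit{Corridor}(u,l,conv(e))$, which contradicts the incompatibility of $e$.
\item \emph{Constraint \ref{constraint_non_intersecting_domains}:} the domain of color $j$ in \relaxRonestrong{} is $conv(\{x_i \text{ of color } j\})$, which is contained in $D_j$. Its interior is therefore contained in $int(D_j)$. The interiors of the $D_j$ are pairwise disjoint, so the required disjointness follows.
\end{itemize}

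\textbf{Main obstacle.} The delicate step is the continuity/closure argument above. It is needed because the definition of a \PWL{} function takes a minimum on boundaries, so $l_j$ is not a priori in the corridor at boundary points of $D_j$. The same argument also makes the choice of color at boundary points harmless. Once it is settled, the remaining steps are direct.
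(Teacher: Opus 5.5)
Your proof is correct and follows the paper's argument. You take an optimal $g^{\mathrm{CFP}}$, color each point of $X$ by a piece containing it, set $L_j$ to that piece's linear function, and check the constraints. You also supply details the paper dismisses as ``by construction''. First, the closure/continuity argument shows that $l_j$ stays in the corridor on all of $D_j$, which is needed because of the minimum taken on boundaries. Second, you note that pieces containing no point of $X$ only make the color count at most $n$ rather than exactly $n$.
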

\begin{proof}
As in the proof of Proposition \ref{prop_relaxR}, it suffices to show that there is a correspondence between any optimal solution of the \RmCFP{} and a feasible solution of \relaxRonestrong{}, and that the objective value of the two solutions are the same.
Let $g^{\mathrm{CFP}}$ be a piecewise linear function solution of the \RmCFP{}. We now build a feasible solution of the model \relaxRonestrong{} from $g^{\mathrm{CFP}}$. Associate to each point $x_i$ of $X$ a color $j$ corresponding to the piece $j$ of $g^{\mathrm{CFP}}$ whose domain contains $x_i$. In addition, define the linear functions $L_j$ with the linear function $j$ of $g^{\mathrm{CFP}}$ on a smaller domain that is the convex hull of points of $X$ colored in color $j$. Remark that all constraints of \relaxRonestrong{} are satisfied by construction. In addition, the objective value of $g^{\mathrm{CFP}}$ and the feasible solution of \relaxRonestrong{} constructed are the same.
\end{proof}

\begin{example}
Figure \ref{figure_DCFP_hypergraph_example} represents the domain of a corridor, with a hypergraph $H$ whose vertices are the black points and whose edges are in blue and yellow. These two edges are incompatibility edges for the instance of the \RmCFP{} for which they were computed. More precisely, for any feasible solution of this instance of the \RmCFP{}, each of these edges must be covered by at least two different linear pieces. Figure \ref{figure_superposition_realisable_aretes} shows a feasible solution of this instance in green, on which the edges of $H$ are superimposed. Remark that both edges are covered by at least two different pieces as intended.
\end{example}
\begin{figure}
    \centering
    \begin{subfigure}[b]{.47\textwidth}
        \includegraphics[width=\linewidth]{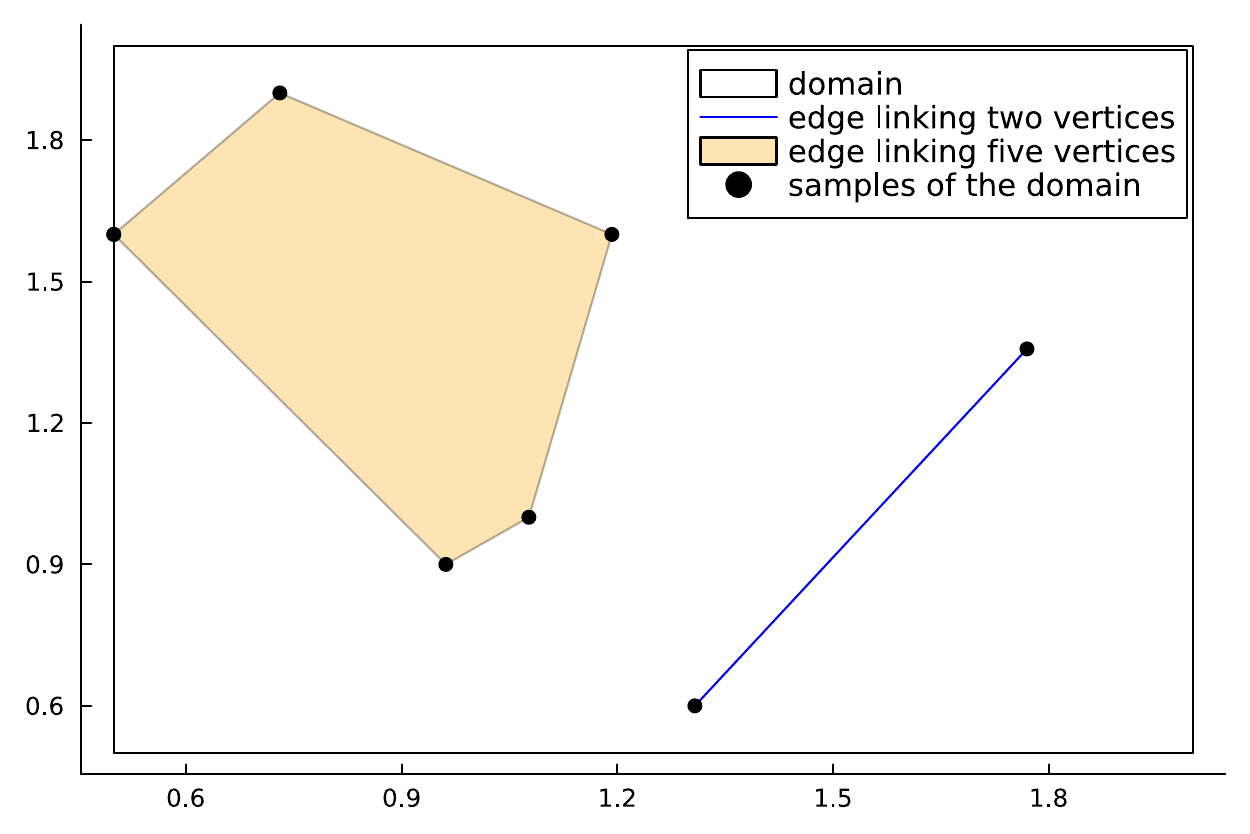}

        \caption{Representation of a hypergraph $H$ with two incompatibility edges}
        \label{figure_DCFP_hypergraph_example}
    \end{subfigure}
    \begin{subfigure}[b]{0.47\textwidth}
        \includegraphics[width=\linewidth]{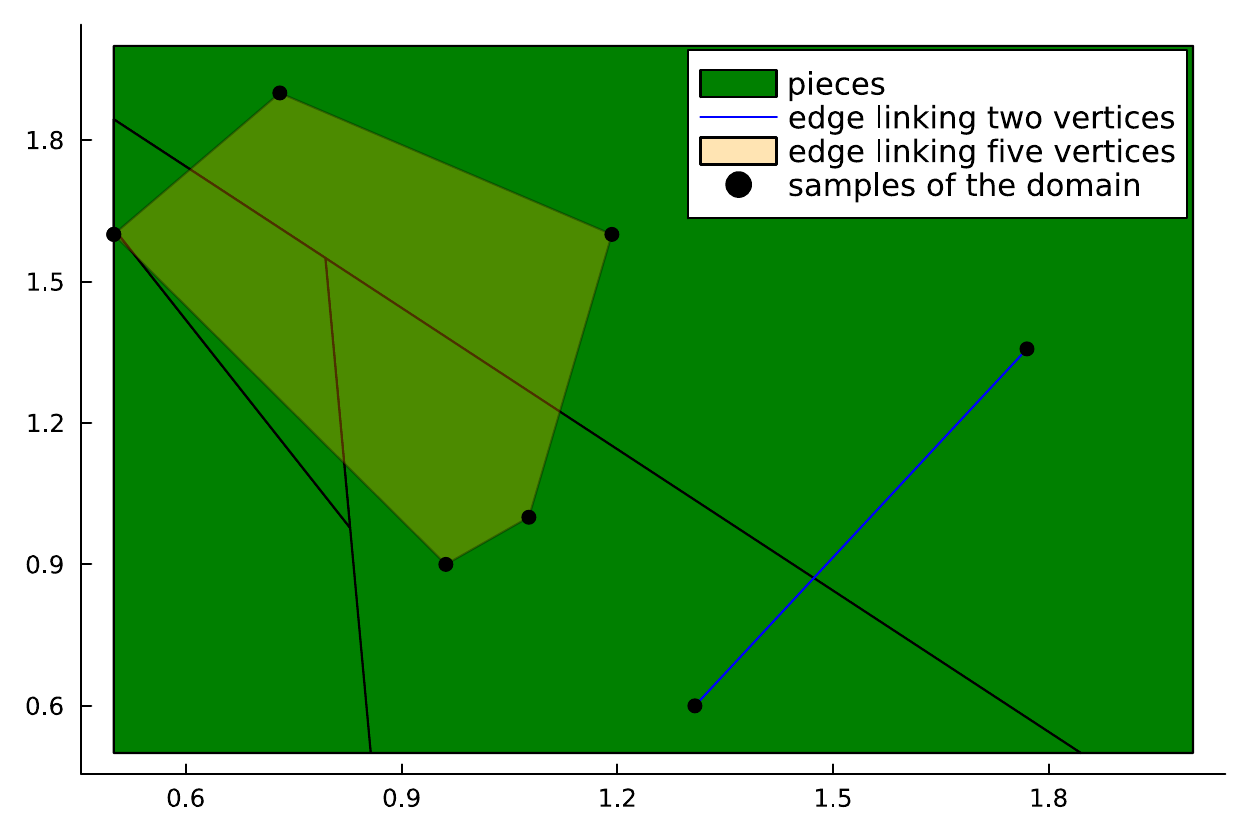}

        \caption{Superposition of incompatibility edges and a feasible solution of the \RmCFP{}}
        \label{figure_superposition_realisable_aretes}
    \end{subfigure}
    \caption{Representation of the hypergraph $H$ in an instance of \relaxRonestrong{}}
    \label{figure_example_DCFP}
\end{figure}

\subsection{\relmaximum{} and \relmaximal{}: clique-based relaxations}

We consider the two following clique-based problems. 

\begin{definition}\relmaximum{}~
Let $\mathcal{C}$ be a corridor, $X\subset D$ a finite set of points in the corridor domain, and $H=(X,E)$ an incompatibility hypergraph for corridor $\mathcal{C}$. Let $G$ be the incompatibility graph obtained from $H$ by removing every edge connected to at least three vertices. Problem \relmaximum{} consists in finding a clique with maximum cardinality among the cliques of $G$. 
\end{definition}

\begin{definition}\relmaximal{}~
Let $\mathcal{C}$ be a corridor, $X\subset D$  a finite set of points in the corridor domain, and $H=(X,E)$  an incompatibility hypergraph for corridor $\mathcal{C}$. Let $G$ be the incompatibility graph obtained from $H$ by removing every edge connected to at least three vertices. Problem \relmaximal{} consists in finding a clique that is not a subset of another clique of $G$. 
\end{definition}
\begin{proposition}
    Consider a corridor $\C=\mathit{Corridor}(u,l,D)$, a finite set $X \subset D$, an incompatibility hypergraph $H=(X,E)$ for $\C$ and a graph $G$ obtained by dropping all edges with at least three vertices in $H$. The optimal solutions of \relmaximum{} and \relmaximal{} provide lower bounds to the \RmCFP{} on the corridor $\C$.
\end{proposition}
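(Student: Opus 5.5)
The plan is to reduce both statements to Proposition \ref{prop_relaxRone}, using the elementary fact that the vertices of a clique in a graph must all receive pairwise distinct colors in any proper coloring. Here ``the optimal solution'' of \relmaximum{} or \relmaximal{} is read as the cardinality of the clique returned.

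\textbf{Step 1: the graph $G$ is an incompatibility hypergraph.} $G=(X,E')$ is obtained from $H$ by deleting every edge with at least three vertices. Hence each remaining edge is an edge of $H$ with one or two vertices. By definition of $H$, each such edge corresponds to a not linearly compatible set. Therefore $G$, viewed as a hypergraph, is itself an incompatibility hypergraph for $\C$. Proposition \ref{prop_relaxRone} then applies to $G$: the optimal value of \relaxRone{} on $G$, namely its chromatic number $\chi(G)$, is a lower bound on the optimal value of the \RmCFP{} on $\C$.

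\textbf{Step 2: any clique size bounds $\chi(G)$.} Let $K\subseteq X$ be a clique of $G$, and let $c$ be any $n$-coloring of $G$. Take two distinct vertices $x,y\in K$. Then $\{x,y\}$ is an edge of $G$, and the coloring condition forbids all its vertices from sharing a color, so $c(x)\neq c(y)$. Thus $c$ is injective on $K$, which gives $n\ge |K|$ and hence $\chi(G)\ge |K|$.

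One degenerate case needs a remark. A singleton edge $\{x\}$ would make $G$ uncolorable, but then $\{x\}$ would be not linearly compatible. This cannot happen, because $l(x)<u(x)$ means the constant function $(l(x)+u(x))/2$ lies in the corridor over $\{x\}$.

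\textbf{Step 3: conclusion for both problems.} Combining the two steps, every clique $K$ of $G$ satisfies $|K|\le\chi(G)\le$ OPT(\RmCFP{}). This holds for an arbitrary clique, so it holds in particular for a maximum clique, which is the optimum of \relmaximum{}, and for any maximal clique, which is a solution of \relmaximal{}.

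The main obstacle is interpretive rather than technical. Since \relmaximal{} has many solutions, one must state explicitly that the bound is the cardinality of the clique found, and check that it holds for \emph{every} clique and not only for optimal ones. Step 2 shows exactly this.

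Alternatively, Step 1 can bypass Proposition \ref{prop_relaxRone} and argue directly as in the proof of Proposition \ref{prop_relaxR}. Given any feasible \PWL{} function $g$, two points of a clique cannot lie in the domain of the same piece. Otherwise that piece's linear function would fit the corridor on the convex hull of the two points, contradicting that the pair is not linearly compatible. Hence $g$ has at least $|K|$ pieces.
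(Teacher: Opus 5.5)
Your proof is correct and follows the paper's argument. The vertices of a clique must receive pairwise distinct colors, so any clique (maximal or maximum) has size at most a chromatic number, and Proposition \ref{prop_relaxRone} bounds that chromatic number by the optimum of the \RmCFP{}. The only difference is cosmetic: you apply Proposition \ref{prop_relaxRone} to $G$ itself, viewed as an incompatibility hypergraph, rather than to $H$, which is equivalent since $\chi(G)\le\chi(H)$. Your remarks ruling out singleton edges and reading the \relmaximal{} bound as the size of any maximal clique make explicit what the paper leaves implicit.
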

\begin{proof}
The cardinal of a maximal clique is a lower bound to the cardinal of a maximum clique, which is itself a lower bound to the chromatic number of a hypergraph. Thus, both the optimal values of \relmaximum{} and \relmaximal{} are lower bounds of \relaxRone{} which is a lower bound to the \RmCFP{} according to Proposition \ref{prop_relaxRone}.
\end{proof}

\subsection{Relative Strengths of the Relaxations}

The following proposition states the relative strengths of the four relaxations defined previously as well as the \RmCFP{}. The proof can be deduced from the derivation of the relaxations.

\begin{proposition}
    Let a corridor $\C=\mathit{Corridor}(u,l,D)$, a finite set $X \subset D$ and an incompatibility hypergraph $H=(X,E)$ for $\C$. Then the optimal values of the four relaxations and the \RmCFP{} satisfy:
    $$ z_{CFP} \geq z_{DCFP} \geq z_{color} \geq z_{MumC} \geq z_{MalC\,.} $$
\end{proposition}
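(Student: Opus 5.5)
We prove the chain one inequality at a time, each time showing that a feasible solution of the stronger problem yields a feasible solution of the weaker one with no larger objective. For the max/maximal clique problems we read the objective as clique cardinality: $z_{MumC}$ is the size of a maximum clique of $G$ and $z_{MalC}$ the size of whichever maximal clique is returned.

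\emph{Step 1: $z_{CFP}\ge z_{DCFP}$.} This is exactly the construction in the proof of Proposition~\ref{prop_relaxation}.
Take an optimal \PWL{} function $g^{\mathrm{CFP}}$ with $n$ pieces on domains $D_j$. Colour each $x_i\in X$ by a piece whose domain contains it, and let $L_j$ be the linear function of piece $j$.
\begin{itemize}
\item Constraint (ii) holds because the convex hulls of the colour classes lie inside the $D_j$, whose interiors are disjoint.
\item Constraint (iii) holds because $g^{\mathrm{CFP}}$ fits the corridor.
\item Every colour class lies in some $D_j$ on which $L_j$ fits the corridor, so each class is linearly compatible. Since an incompatibility edge is a not linearly compatible set, no edge of $H$ is monochromatic.
\end{itemize}
One technical point: at a boundary point $x$, $g(x)=\min_i l_i(x)$, so we must assign $x$ to a piece attaining the minimum. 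Alternatively, we note that each $l_j$ fits the corridor on all of $D_j$ by continuity of $u$ and $l$. Colours actually used number at most $n$, so $z_{DCFP}\le n$.

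\emph{Step 2: $z_{DCFP}\ge z_{color}$.} Every feasible solution of \relaxRonestrong{} is by definition a feasible colouring of $H$ with the same number of colours, since \relaxRonestrong{} only adds constraints (i)--(iii) to \relaxRone{}. Minimising over a smaller feasible set gives a larger optimum.

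\emph{Step 3: $z_{color}\ge z_{MumC}$.} Let $K$ be a maximum clique of $G$. Every pair in $K$ is a $2$-vertex hyperedge of $H$, and a colouring of $H$ must give the two endpoints distinct colours. Hence any feasible colouring uses at least $|K|$ colours. Dropping larger hyperedges can only remove constraints, so this holds for every feasible colouring of $H$.

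\emph{Step 4: $z_{MumC}\ge z_{MalC}$.} Any maximal clique is a clique, so its size is at most the maximum clique size.

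The only delicate step is Step~1, specifically the boundary assignment and the fact that constraint (ii) concerns interiors of convex hulls. Both are handled by the inclusion $\mathrm{conv}(\text{colour class } j)\subseteq D_j$, which holds because $D_j$ is convex. The remaining steps are purely set-theoretic.
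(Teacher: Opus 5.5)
Your proof is correct and is essentially the paper's own argument. The paper only says the chain follows from the derivations: the colouring construction in the proof of Proposition~\ref{prop_relaxation} gives $z_{CFP}\ge z_{DCFP}$, \relaxRonestrong{} is \relaxRone{} with added constraints, and the clique-relaxation proposition gives maximal clique $\le$ maximum clique $\le$ chromatic number. Your continuity argument (each piece's linear function fits the corridor on its whole closed domain) and your remark that at most $n$ colours are used make explicit two points the paper's construction passes over.
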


Obviously, the strength of a relaxation is decreasing with the amount of information of the original problem that is discarded. \relaxRonestrong{} uses the hypergraph coloring structure as well as other specific information such as satisfying the corridor constraints on the vertices and convexity information. \relaxRone{} only uses the hypergraph coloring structure. The two clique relaxations only use the graph coloring structure. We will see in the numerical experiments that the quality of the lower bounds produced in a given time follows the same trend.

The following result gives the complexity of the relaxations.

\begin{proposition} \label{prop_complexity_relaxations}
    \relaxRonestrong{}, \relaxRone{} and \relmaximum{} are NP-hard whereas \relmaximal{} is in P.
\end{proposition}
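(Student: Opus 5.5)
The plan is to handle the four problems separately, using reductions from classical NP-hard problems. The hardness statements are about arbitrary input data: the corridor and $X$ may be fairly general, while the incompatibility hypergraph $H$ can be any hypergraph whose edges really are not linearly compatible sets.

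\emph{Polynomial case (\relmaximal{}).} A maximal clique of $G$ is found greedily. Start with any vertex. Repeatedly add a vertex adjacent to every vertex already in the clique, until no such vertex exists. Each step is a scan of $X$ with adjacency checks, so the total work is $O(|X|^2)$ given $G$. Extracting $G$ from $H$ only means discarding edges with at least three vertices, which is linear in the size of $H$.

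\emph{Hardness of \relmaximum{} and \relaxRone{}.} I would reduce from Maximum Clique and from graph $k$-Coloring. The key step is a realization lemma: every finite graph $G'=(V',E')$ is the incompatibility graph of some corridor and point set, up to adding more non-edges.
\begin{itemize}
\item Take a one-dimensional corridor (or embed it in $\mathbb{R}^m$ by letting $u,l$ not depend on the other coordinates).
\item Place the points $x_v$ of $X$ at distinct locations, and let $H$ consist exactly of the pairs in $E'$.
\item Each declared pair must be truly not linearly compatible. A simple choice is a corridor whose width is tiny compared to its curvature, for example around $f(x)=x^2$ with very small $\delta$. Then every pair of distinct points is not linearly compatible, because no line stays within $\delta$ of $f$ on the whole segment between them.
\end{itemize}
Since an incompatibility hypergraph may contain any subset of the incompatible sets, every graph $G'$ arises as $H$. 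Graph coloring and maximum clique on $G'$ are then exactly \relaxRone{} and \relmaximum{}, which gives NP-hardness. The main care point is the encoding size: the points and $\delta$ must be described with polynomially many bits, which the choice $f(x)=x^2$ with rational points satisfies.

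\emph{Hardness of \relaxRonestrong{}.} This is the main obstacle, because here the geometry constraints \ref{constraint_valid_inequalities}--\ref{constraint_def_discrete_pointwise} interact with the combinatorics. The same trick of a very thin corridor with all pairs incompatible makes the problem trivial, since every point then gets its own color. I would instead try a reduction from a known NP-hard geometric covering or partition problem. The first candidate is partitioning planar points into a minimum number of subsets whose convex hulls have disjoint interiors and each admit a linear fit within $\delta$. Such fitting problems (``minimum number of hyperplanes fitting points within $\delta$'') are known to be NP-hard, for instance via point-line cover.

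Concretely, I would take a zero-thin-like corridor around data values $z_i$ at the points $x_i$, with $H=\emptyset$, so that only constraints (i)--(iii) bind. I would then argue that \relaxRonestrong{} is at least as hard as this piecewise-linear fitting problem with disjoint convex pieces. If that reduction does not go through directly, the fallback is to embed graph coloring again. The idea is to place the points so that the convex-hull disjointness constraint is automatically satisfiable, for example points in convex position in dimension $m$ large enough, and to use the hypergraph constraints to carry the hardness.
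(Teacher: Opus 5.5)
Your arguments for \relmaximal{}, \relaxRone{} and \relmaximum{} are sound and follow the paper's idea: embed an arbitrary graph as a non-complete incompatibility hypergraph of a thin corridor. Your one-dimensional parabola is a simpler realization than the paper's, and it suffices because these three problems only see the edges of $H$.

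The gap is \relaxRonestrong{}, for which you do not actually give a proof. Your primary route rests on an unsupported claim. Hardness of point-line cover or hyperplane fitting does not transfer to a partition problem that also requires the pieces' convex hulls to have disjoint interiors, and you supply no reduction. Your reason for abandoning the thin-corridor trick is also mistaken. Constraint (iii) is imposed only at the points of $X$, so a thin corridor never forces distinct colors by itself; incompatibility enters only through the edges you put in $H$. The paper uses precisely a thin corridor in which every pair is incompatible. What breaks your 1D instance is the point geometry: on a parabola with $\delta$ small compared to the spacing, (iii) already caps each color class at two points. The optimum is then no longer the chromatic number of $G$.

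The missing ingredient is affine independence of $X$, which your fallback gestures at but does not pin down. Convex position is not enough: the two alternating triangles of a regular hexagon split points in convex position into classes whose hulls have overlapping interiors. The paper's construction is as follows.
\begin{itemize}
\item It takes $m=|V|$, $X=\{x^1,\dots,x^m\}$ the unit vectors, $l(x)=\lVert x\rVert$, $u=l+\epsilon$, and $H\cong G$.
\item The norm drops to $1/\sqrt{2}$ at the midpoint of each segment $\mathrm{conv}\{x^i,x^j\}$. Any affine function with values in $[1,1+\epsilon]$ at the endpoints is at least $1$ there. So for small $\epsilon$ every pair is not linearly compatible, and the edges of $G$ are legitimate incompatibility edges.
\item Every feasible solution of \relaxRonestrong{} is a proper coloring of $H$ by definition.
\item Conversely, take any proper coloring with classes $S_j$. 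The hulls $\mathrm{conv}(S_j)$ are disjoint faces of the simplex, giving (ii). The function $L_j(x)=\sum_{i\in S_j}x_i$ takes the value $1=l(x^i)<u(x^i)$ on $S_j$, giving (i) and (iii).
\end{itemize}
Hence $z_{DCFP}$ equals the chromatic number $\chi(G)$, and the same instance also settles \relaxRone{} and \relmaximum{}. To keep $D$ full-dimensional, as the corridor definition requires, you may take $D=\mathrm{conv}\{0,x^1,\dots,x^m\}$; nothing else changes.
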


The proof can be found in Appendix \ref{section_complexity_proof}.

\section{Computation of incompatibility hypergraphs and resulting lower bounding algorithms} 
\label{sec:compute_edges}

The implementation of algorithms leveraging the relaxations of Section \ref{sec:relaxations} have two necessary components: producing an incompatibility hypergraph and solving the corresponding relaxations. We explain in this section how the first component is implemented and how the second component is included in algorithms.
In practice, the computation of incompatibility edges must be fast, since a numerical algorithm often requires the identification of many such edges. Therefore we propose heuristic procedures rather than exact ones to test whether a finite subset of domain points constitutes an incompatibility edge. 
Section \ref{subsec:construction_vertices} describes how to select the set of vertices of incompatibility hypergraphs, while Sections \ref{subsec:construction_aretes_2} and \ref{subsec:construction_aretes_multi} detail methods to compute incompatibility edges. 
Finally, Section \ref{subsec:algo_relaxations} gives the main idea behind our implementations of algorithm leveraging the relaxations of Section \ref{sec:relaxations} to produce lower bounds to the \RmCFP{}. Details are given in Appendix \ref{sec:detail_algorithms}. The pseudo-code is generic but our implementation for the computational experimentation is specific to the two-variable case.

\subsection{Computation of the vertices} \label{subsec:construction_vertices}
Consider a $\mathit{Corridor}(u,l,D)$ and a number $p_{\textit{vertices}}$ of vertices wanted.
The idea of our implementation is one of the many ways to get an ``approximate uniform sampling'' while producing exactly $p_{vertices}$ vertices for comparison purposes.
If the domain is not a hyperrectangle, find a hyperrectangle containing the domain. Then, find a sampling size so that a regular grid with extreme points the vertices of the hyperrectangle would possess close but less points than the number of points wanted inside the domain.
Sample the hyperrectangle according to the regular grid and remove samples that do not belong to the domain. Finally, produce enough new samples to reach $p_{vertices}$ points, possibly by adjusting a few samples to keep the pseudo ``regular'' property of the sampling. 
%

\subsection{Computation of incompatibility edges of two Vertices} \label{subsec:construction_aretes_2}

Consider a $\mathit{Corridor}(u,l,D)$ and let $x_1$ and $x_2$ be two points in the domain $D$. The edge $e=(x_1,x_2)$ is an incompatibility edge if and only if the \RmCFP{} on $\mathit{Corridor}(u, l, conv(x_1,x_2))$ does not admit a solution with one piece. This problem can be seen as an \rCFP{} because a function $f$ of several variables can be restricted to a one-variable function $t$ on a line segment $s = conv(x_1,x_2)$. Indeed, using a convex combination we have $\restriction{f}{s}(t) = f(x(t)) = f(t x_1 + (1-t) x_2)$.

The heuristic method we propose consists in checking if there exists parameters $a$ and $b$ such that $at+b$ is a linear function satisfying the corridor on a finite subset of points $T$ selected from the line segment $s = conv(x_1,x_2)$. This problem can be formulated as a linear feasibility problem with two variables $a$ and $b$ and $2|T|$ constraints:
\begin{equation}
    (\mathbb R\textrm{-LDRFP})\quad l(t_i) \leq a t_i+b \leq u(t_i) \quad \forall t_i \in T, \label{eq_R-DRFP}
\end{equation}
where LDRFP stands for Linear Data Range Fitting Problem.
If there are no real numbers $a$ and $b$ solution of $\mathbb R\textrm{-LDRFP}$, then the edge $\{x_1,x_2\}$ is proven to be an incompatibility edge; however, if there exists such real numbers, it does not prove that it is not an incompatibility edge because we solved a relaxation of the \rCFP{} on $\mathit{Corridor}(u, l, conv(x_1,x_2))$. 
\begin{algorithm}[!ht]
\textbf{Input:} X a finite set of vertices \\
\textbf{Output:} $E_2$ a set of incompatibility edges of size two
\begin{algorithmic}[1]
\State $E_{2} = \{\}$ 
\For{$x_1,x_2$ in $X$} 
    \State $e = (x_1,x_2)$
    \If{\textsc{IsAnIncompatibilityEdge}(e)} \Comment{if $\mathbb R\textrm{-LDRFP}$ \eqref{eq_R-DRFP} is infeasible}
        \State $E_{2} \xleftarrow{} E_{2} \cup e$
    \EndIf
\EndFor
\end{algorithmic}
\caption{Compute all incompatibility edges of two vertices}
\label{algorithm_build_edge_sizetwo}
\end{algorithm}

In our numerical implementation, $T$ is a regular sampling of the line segment $s$ with $N_{samples}$ points. To solve the $\mathbb{R}$-LDRFP, we use the algorithm of \citet{O'Rourke81} modified to stop as soon as it is shown that no line segment can cover all the interval. It has linear complexity in $|T|$ and is therefore very fast.

\subsection{Computation of incompatibility edges of at least three vertices} \label{subsec:construction_aretes_multi}

Consider the corridor $\mathcal{C}=\mathit{Corridor}(u,l,D)$ and a finite set $X \subset D$. 
A subset of points $e \subset X$ is an incompatibility edge for the corridor $\mathcal{C}$ if there is no linear function that is a solution to the \RmCFP{} on $\mathit{Corridor}(u, l, conv(e))$.
Thus, the problem of checking if an edge is an incompatibility edge has a natural model as a semi-infinite program. Here is such a mathematical model: find $a \in \mathbb R^m$ and $b \in \mathbb R$ such that
\begin{equation}
    l(x) \leq a^Tx+b \leq u(x) \quad \forall x \in conv(e),
\end{equation}
where there is an infinite number of constraints due to the infinite number of points of the convex hull. For more detail on semi-infinite programming, we refer the reader to \cite{Djelassi21_SIPreview}. 
Semi-infinite programming problems are hard to solve in general, so we propose a heuristic method: solving a discretization-based relaxation of the problem. Given a finite set $S \subset conv(e)$, the problem consists in finding $a \in \mathbb R^m$ and $b \in \mathbb R$ such that:
\begin{equation}
    (\mathbb R^m\textrm{-LDRFP}) \quad l(x_i) \leq a^Tx_i+b \leq u(x_i) \quad \forall x_i \in S \subset conv(e). \label{eq_R2-DRFP}
\end{equation}
It can be formulated as a linear program with $m+1$ variables and $2|S|$ constraints.
Any instance for which there is no solution to the relaxation implies $e$ is an incompatibility edge for $\mathcal{C}$.
Thus, at the cost of possibly failing to detect some incompatibility edges we enforce the corridor constraints only on a finite subset $S$ of points of $conv(e)$.
\begin{algorithm}[!ht]
\textbf{Input:} \\
- X a finite set of vertices, \\
- $E_2$ a set of incompatibility edges of size two, \\
- $k_{repetition}$ the number of repetitions for each vertex \\
\textbf{Output:} $E_{3+}$ a set of incompatibility edges with three or more vertices
\begin{algorithmic}[1]
\State $E_{3+} = \{\}$ 
\For{$x$ in $X$} \label{algo_outer_for_Em} 
    \For{$k_{rep}$ in $\{1, ..., k_{repetition}\}$}
        \State $e \xleftarrow{} \{x\}$ \Comment{$e$ is the edge under construction} \label{algo_init_arete}
        \For{$y$ in $\textsc{Shuffle}(X \setminus e)$}
            \If{$(y,s) \notin E_2 \quad \forall s \in e$} \label{algorithm_line_arete}
                \If{$y \notin conv(e)$}    \Comment{removes non extremal points}\label{algo_test_inside} 
                    \State $e \xleftarrow{} e \cup \{y\}$
                    \If{\textsc{IsAnIncompatibilityEdge}(e)} \Comment{if $\mathbb R^m\textrm{-LDRFP}$ \eqref{eq_R2-DRFP} is infeasible}\label{algo_line_test_incompatibility}
                        \State $E_{3^+} \xleftarrow{} E_{3^+} \cup e$
                        \State \textbf{break}
                    \EndIf
                \EndIf
            \EndIf
        \EndFor
    \EndFor
\EndFor
\end{algorithmic}
\caption{Compute incompatibility edges of at least three vertices}
\label{algorithm_build_edge_many}
\end{algorithm}

Algorithm \ref{algorithm_build_edge_many} describes how we compute incompatibility edges of three or more vertices. Due to the exponential number of potential incompatibility edges, it does not check all potential edges:
it only attempts to build $|X| \times k_{repetition}$ edges.
Starting from a subset containing only the vertex $x$, the algorithm adds one by one vertices $y$ randomly chosen among the remaining vertices until either the subset is proved to be an incompatibility edge or all vertices have been added.

If $e \subset e'$ is an incompatibility edge, then $\mathit{Corridor}(u,l,conv(e'))$ is also an incompatibility edge. In this case adding $e$ and $e'$ to the hypergraph is redundant and we keep only the tightest constraint among the two, i.e., the one induced by $e$.
That is why the test on Line \ref{algorithm_line_arete} ensures that the new edge does not contain a size two incompatibility edge, and Line \ref{algo_test_inside} prevents vertices $y \in conv(e)$ from being added. Finally, the test on Line \ref{algo_line_test_incompatibility} solves the $\mathbb R^m\textrm{-LDRFP}$ on the corridor given in input with finite subset of points $S$. It returns false if there exists a solution and true if not. The implementation details on the construction of the set $S$ are discussed in Appendix \ref{sec:construction_S}.

\subsection{Lower bounding algorithms leveraging the relaxations} \label{subsec:algo_relaxations}

We derive one algorithm for each of the four relaxations from Section \ref{sec:relaxations}. Each algorithm \algcolor{}, \algmaximum{} and \algmaximal{} iteratively computes lower bounds for the \RmCFP{} by solving its corresponding relaxation on incompatibility hypergraphs (or graphs) derived from progressively finer discretizations of the domain $X \subset D$, until the time limit is reached. Algorithm \algmilp{} computes lower bounds by solving the decision version of \relaxRonestrong{} for $n$ pieces and hypergraph $H$, denoted \relaxRonestrongdecision{}.

Given the structural similarities among these methods, we provide a generic pseudo-code hereafter and defer the detailed descriptions to Appendix \ref{sec:detail_algorithms}.

\begin{algorithm}[!ht]
\begin{algorithmic}[1]
\While{time remaining $>0$} 
    \State Build the incompatibility graph or hypergraph $H=(X,E)$.
    \State Solve the relaxation \relaxRonestrongdecision{}, \relaxRone{}, \relmaximum{} or \relmaximal{}.
    \State Update the best known lower bound $\mathit{LB}$ or increase the discretization size $|X|$
\EndWhile
\State $\text{Return }$the best known lower bound $\mathit{LB}$
\end{algorithmic}
\caption{Lower bounding algorithms \algmilp{}, \algcolor{}, \algmaximum{} and \algmaximal{}}
\label{algorithm_lowerbounds}
\end{algorithm}

\section{Improved Upper Bounds of the \RCFP{}} 
\label{sec:new_UB}
We describe the improvement of two heuristics from the literature.

\subsection{DLN: improvement of Algorithm 1 of \citet{Duguet22a}} \label{subsection_DLN_improvement}
We improve the implementation of the algorithm 1 of \citet{Duguet22a} used to compute feasible solutions to the \RCFP{} and recalled in Section \ref{subsubsec:stateofartR2cfp} of the literature review of this paper.

Specifically, the implementation of the computation of a linear piece fitting a corridor on a convex subdomain that is 'as large as possible', referred to as \textit{maximal piece in direction $d$ problem} \citep{Duguet22a}, has been improved. It results in a linear piece covering a larger domain with the drawback of being more time-consuming. Appendix \ref{section_improvement_DLN} provides more details on this improvement as well as another implementation improvement due to a change of library for the interval analysis operations.

The implementation has been made publicly available\footnotemark{}\footnotetext{https://github.com/LICO-labs/two-variable-function-PWL-approximation} and the method is refered to as DLN in the numerical results presented in Section \ref{subsection_results_heuristics}.

\subsection{\iLinA{}: improvement of LinA2D} \label{subsection_iLinA2D_improvement}

In the literature, heuristics RK1D \citep{Rebennack15a} and LinA2D \citep{Duguet22a} approximate a bivariate function $f(x,y)$ with a target precision $\delta$ by applying a transformation $\widetilde{\gamma}(x)$ and decomposing $f$ into a combination of two univariate functions $\widetilde{f}_1(x)$ and $\widetilde{f}_2(x)$, which are then approximated separately to obtain rectangular pieces. LinA2D produces approximations with fewer pieces than RK1D because it is not constrained to continuous piecewise‑linear representations: discontinuity is allowed. Both heuristics must split the error budget $\widetilde{\gamma}(\delta)$ between the two univariate approximations. In practice, each method simply divides the budget equally between the two terms. However, any split of the budget would lead to a valid approximation.

To address this limitation for linearly separable functions $f(x,y)=f_1(x)+f_2(y)$ we introduce \iLinA{} which optimally splits the error budget between the two univariate approximations.  It is summarized in Algorithm \ref{algorithm_iLinA2D}. Starting from the feasible solution provided by an equal split of the error budget, \iLinA{} deduces the interval $[n_1^{\textrm{min}}, n_1^{\textrm{max}}]$ that contains the optimal number of linear pieces required for the piecewise linear approximation of $f_1$ in any valid error split. Then for each integer $n_1$ in $[n_1^{\textrm{min}}, n_1^{\textrm{max}}]$, the algorithm determines the minimum approximation error $\gamma_1$ that yields a piecewise linear approximation of $f_1$ with exactly $n_1$ pieces. The remaining error budget, $\gamma_2 = \delta - \gamma_1$, is then allocated to the approximation of  $f_2$. The corresponding number of pieces, $n_2$, required to approximate $f_2$ with approximation error $\gamma_2$, is subsequently computed. If the total number of pieces $n_1n_2$ improves upon the best known solution $z^{\textrm{best}}$, the resulting configuration is retained as the new incumbent.

\begin{algorithm}[!ht]
\textbf{Input:} \\
- $\delta$ the target precision or total error budget \\
- $f_1, f_2$ the two one-variable functions that verify $f(x,y)=f_1(x)+f_2(y)$\\
\textbf{Output:} $n_1^{\textrm{best}}, n_2^{\textrm{best}}, z^{\textrm{best}}$ the best solution after error split optimization

Compute a feasible solution with an equally split error budget

\begin{algorithmic}[1]
\State $n_1 \xleftarrow{}$ \textsc{MinNumberOfPiecesUnivariate}(${f}_1$, $\frac{\delta}{2}$) \Comment{see Algorithm \ref{algorithm_min_nber_pieces}} 
\State $n_2 \xleftarrow{}$ \textsc{MinNumberOfPiecesUnivariate}(${f}_2$, $\frac{\delta}{2}$) 
\State $\{n_1^{\textrm{best}}, n_2^{\textrm{best}}, z^{\textrm{best}}\} = \{n_1, n_2, n_1 n_2\}$ 
\end{algorithmic}
Compute a valid range of values for $n_1$

\begin{algorithmic}[1]
\setcounter{ALG@line}{3}
\State $n_1^{\textrm{min}} \xleftarrow{}$ \textsc{MinNumberOfPiecesUnivariate}(${f}_1$, $\delta$) 
\State $n_2^{\textrm{min}} \xleftarrow{}$ \textsc{MinNumberOfPiecesUnivariate}(${f}_2$, $\delta$) 
\State $n_1^{\textrm{max}} = \left\lfloor \frac{ z^{\textrm{best}} }{n_2^{\textrm{min}}} \right\rfloor $
\end{algorithmic}

Explore the range of values for $n_1$ to find the optimal split of the error budget

\begin{algorithmic}[1]
\setcounter{ALG@line}{6}
\For{$n_1$ in $[n_1^{\textrm{min}}, n_1^{\textrm{max}}]$} 
    \State $\gamma_1 \xleftarrow{}$ \textsc{MinErrorUnivariate}($f_1$, $n_1$) \Comment{see Algorithm \ref{algorithm_iLinA2D_detailed}}
    \State $\gamma_2 = \delta - \gamma_1$
    \State $n_2 \xleftarrow{}$ \textsc{MinNumberOfPiecesUnivariate}($f_2$, $\gamma_2$) 
    \If{$n_1n_2 < z^{\textrm{best}}$} 
        $\{n_1^{\textrm{best}}, n_2^{\textrm{best}}, z^{\textrm{best}}\} = \{n_1, n_2, n_1 n_2\}$ 
    \EndIf
\EndFor
\end{algorithmic}

Return the best solution found

\begin{algorithmic}[1]
\setcounter{ALG@line}{13}
\State $\text{Return } n_1^{\textrm{best}}, n_2^{\textrm{best}}, z^{\textrm{best}}$ 
\end{algorithmic}
\caption{Algorithm \iLinA{}}
\label{algorithm_iLinA2D}
\end{algorithm}

By design \iLinA{} outperforms LinA2D in piece count: it matches its performance when it is optimal to evenly divide the error budget, and produces strictly better results when an uneven split reduces the number of pieces of the solution. It employs two main procedures, \textsc{MinNumberOfPiecesUnivariate} and \textsc{MinErrorUnivariate}, both efficiently implemented using PiecewiseLinApprox.jl\footnote{https://github.com/LICO-labs/PiecewiseLinApprox.jl} (formerly LinA.jl) as described in Appendix \ref{section_appendix_findminerror_minnberpieces}. 

\section{Numerical Results} \label{sec:experiences_numeriques}

We present and analyze the numerical results, on the \RCFP{}, of the four lower bounding algorithms described in Section \ref{subsec:algo_relaxations} and the best-known heuristics. Nevertheless, the theoretical foundation of the lower bounding algorithms is valid for the generic \RmCFP{}.
The section is organized as follows. The instance set and parameter settings are described in Section \ref{subsection_instances_params}. The four lower bounding algorithms described in Section \ref{subsec:algo_relaxations} are compared in Section \ref{subsection_results_lowerbounds}, together with two variants of \algmilp{} designed to show the influence of the presence of hypergraph coloring constraints on the MILP model \eqref{model_DCFP}. The upper bounds produced by algorithms of the literature are summarized in Section \ref{subsection_results_heuristics}. Finally, we compare the best lower bounds with the best upper bounds of the \RCFP{} in Section \ref{subsection_optimal_solutions} to prove for the first time the optimality of a significant proportion of the instance set.

\subsection{Instance Set and Parameter Settings} 
\label{subsection_instances_params}

We use the standard instance set of the literature on the \RCFP{} \citep{Rebennack15a}. It consists in 45 instances created from 9 nonlinear bivariate functions and 5 different values of the absolute approximation error $\delta$. 
The two first functions are linearly separable while the seven others are non-linearly separable, so we call them $L_1$, $L_2$, $N_1$, ..., $N_7$. Table~\ref{heur_table_functions_definition} gives the expression and the domain of each function.  Their graphs are showed in Figure \ref{heur_figure_benchmark}.

\begin{table}[!ht]
    \centering
    \caption{Expressions and domains of the functions in the instance set}
    \begin{tabular}{ccc} 
        \toprule
        ref & expression & domain \\ \midrule 
        $L_1$ & $x^2-y^2$ & $[0.5,7.5] \times [0.5,3.5]$ \\ 
        $L_2$ & $x^2+y^2$ & $[0.5,7.5] \times [0.5,3.5]$ \\ 
        $N_1$ & $x y$ & $[2,8] \times [2,4]$ \\ 
        $N_2$ & $x \exp^{-x^2-y^2}$ & $[0.5,2] \times [0.5,2$] \\ 
        $N_3$ & $x \sin(y)$ & $[1,4] \times [0.05,3.1]$ \\ 
        $N_4$ & $\frac{\sin(x)}{x}y^2$ & $[1,3] \times [1,2]$ \\ 
        $N_5$ & $x \sin(x) \sin(y)$ & $[0.05,3.1] \times [0.05,3.1]$ \\ 
        $N_6$ & $(x^2-y^2)^2$ & $[1,2] \times [1,2]$ \\ 
        $N_7$ & $\exp^{-10 (x^2-y^2)^2}$ & $[1,2] \times [1,2]$ \\ \bottomrule
    \end{tabular}
    \label{heur_table_functions_definition}
\end{table}

\begin{figure}[!ht]
\centering

\begin{subfigure}[b]{.32\linewidth}
\includegraphics[width=\linewidth]{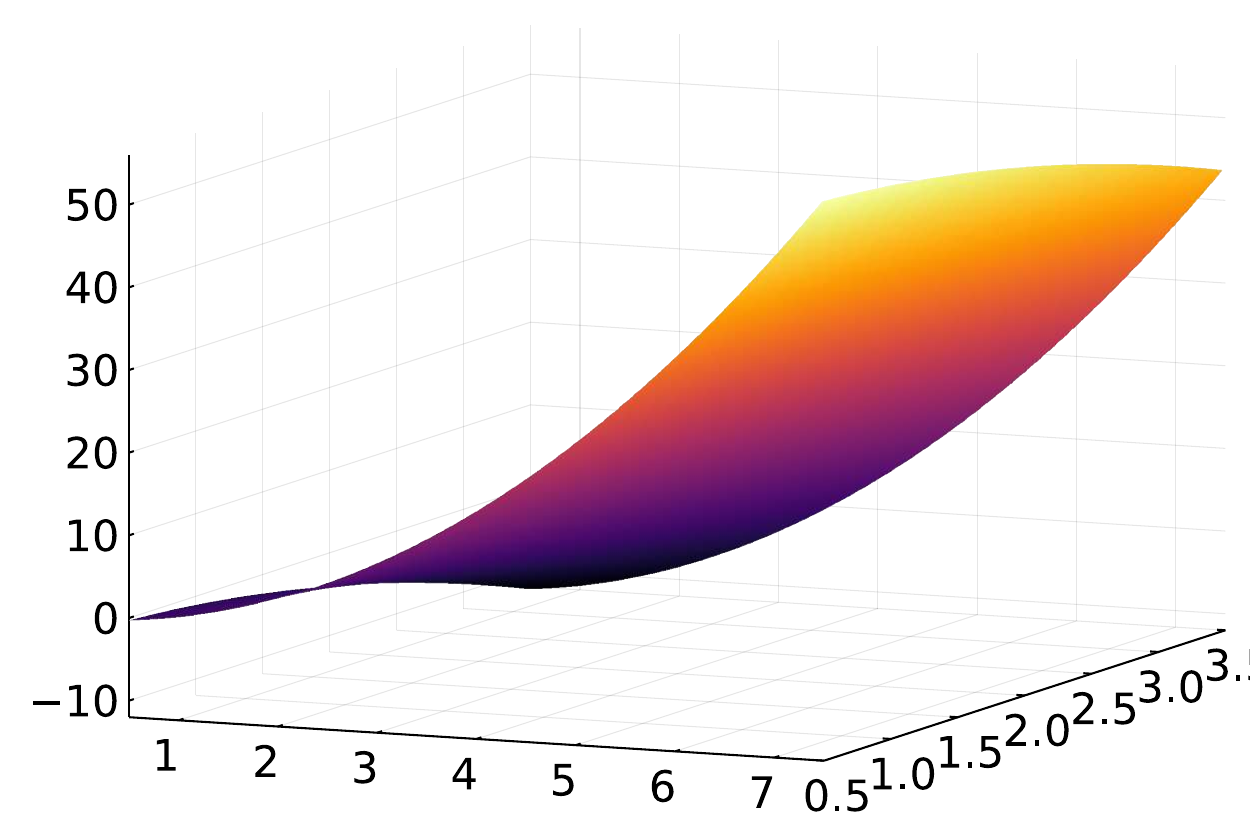}
\caption{$L_1$}\label{heur_figure_benchmark_L1}
\end{subfigure}
\begin{subfigure}[b]{.32\linewidth}
\includegraphics[width=\linewidth]{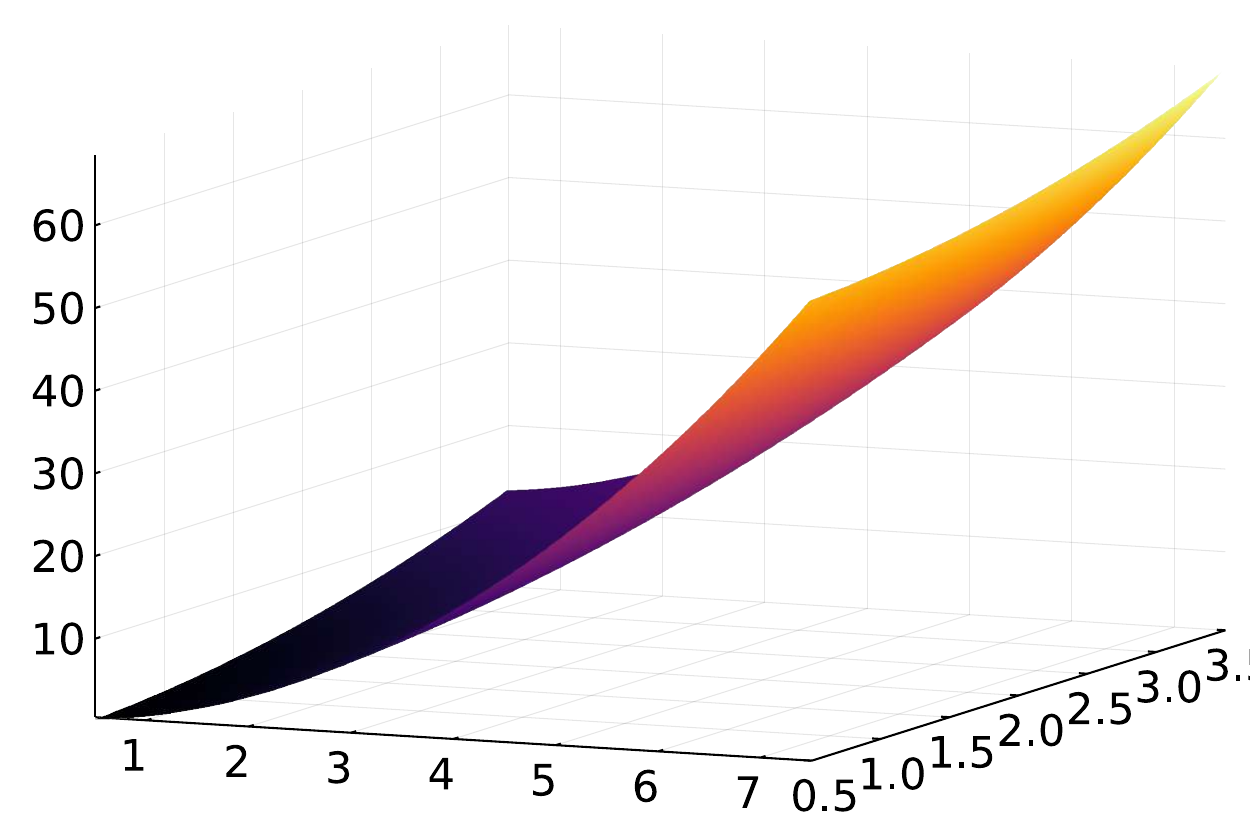}
\caption{$L_2$}\label{heur_figure_benchmark_L2}
\end{subfigure}
\begin{subfigure}[b]{.32\linewidth}
\includegraphics[width=\linewidth]{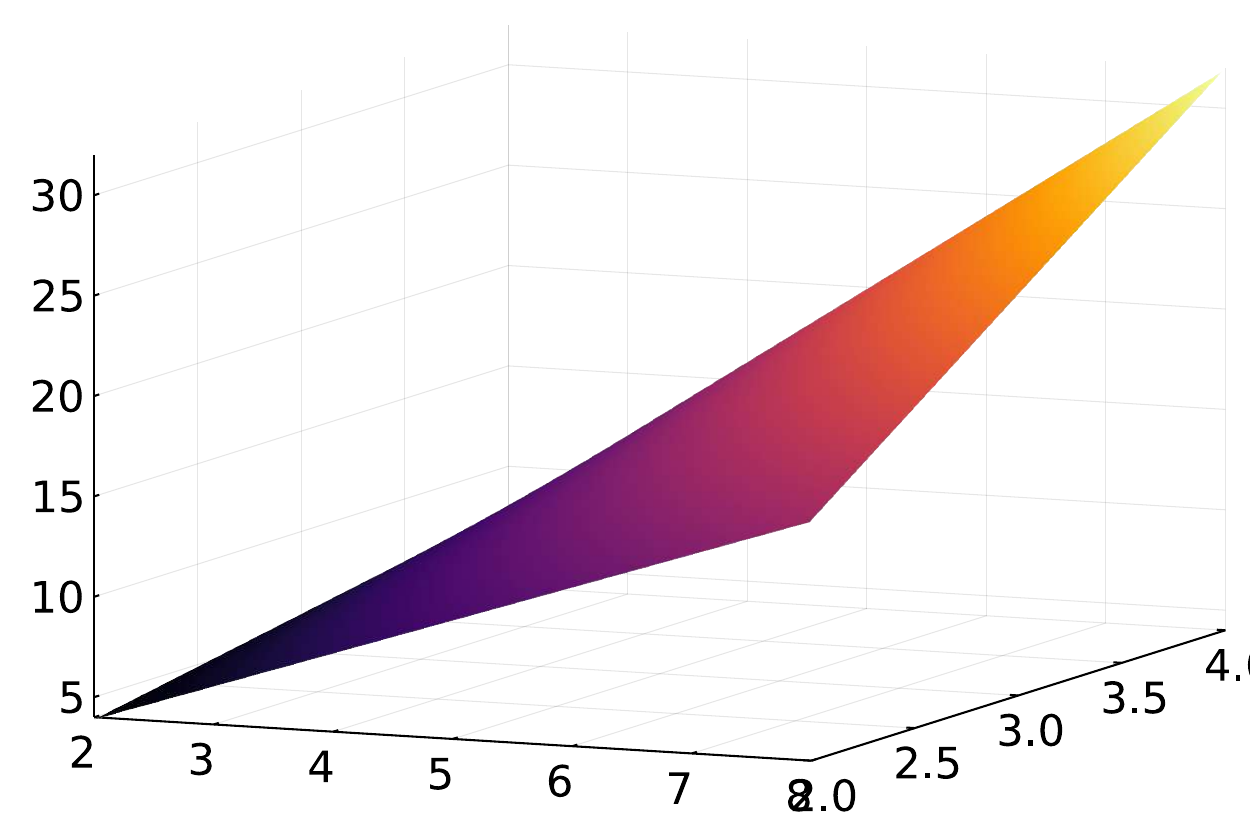}
\caption{$N_1$}\label{heur_figure_benchmark_N1}
\end{subfigure}

\begin{subfigure}[b]{.32\linewidth}
\includegraphics[width=\linewidth]{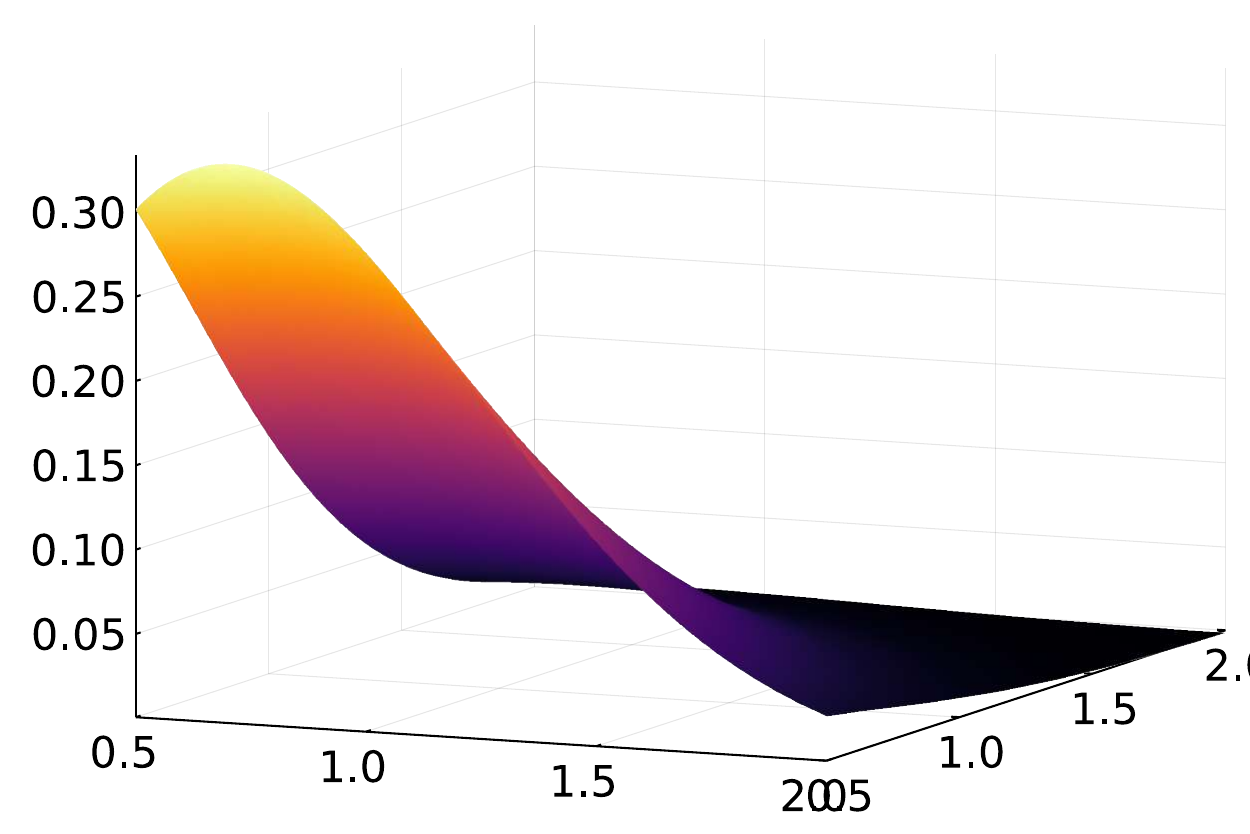}
\caption{$N_2$}\label{heur_figure_benchmark_N2}
\end{subfigure}
\begin{subfigure}[b]{.32\linewidth}
\includegraphics[width=\linewidth]{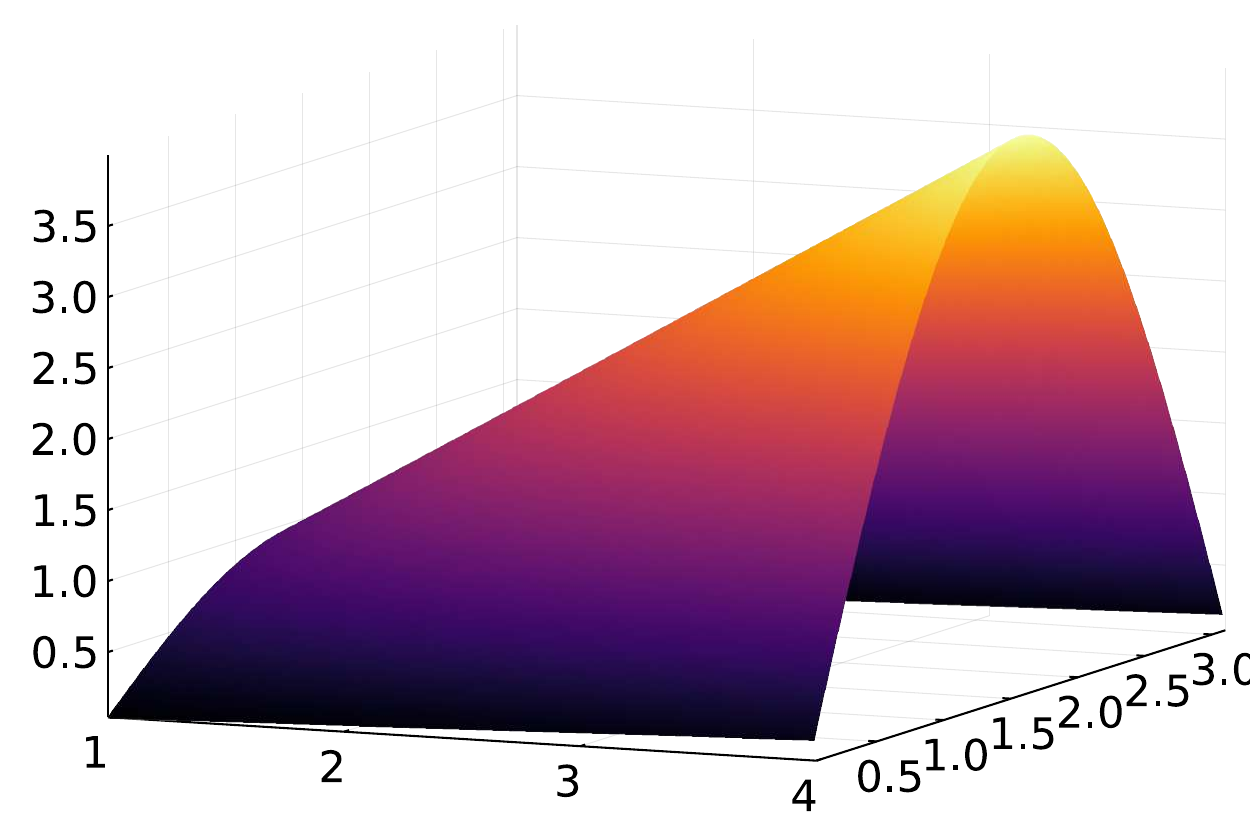}
\caption{$N_3$}\label{heur_figure_benchmark_N3}
\end{subfigure}
\begin{subfigure}[b]{.32\linewidth}
\includegraphics[width=\linewidth]{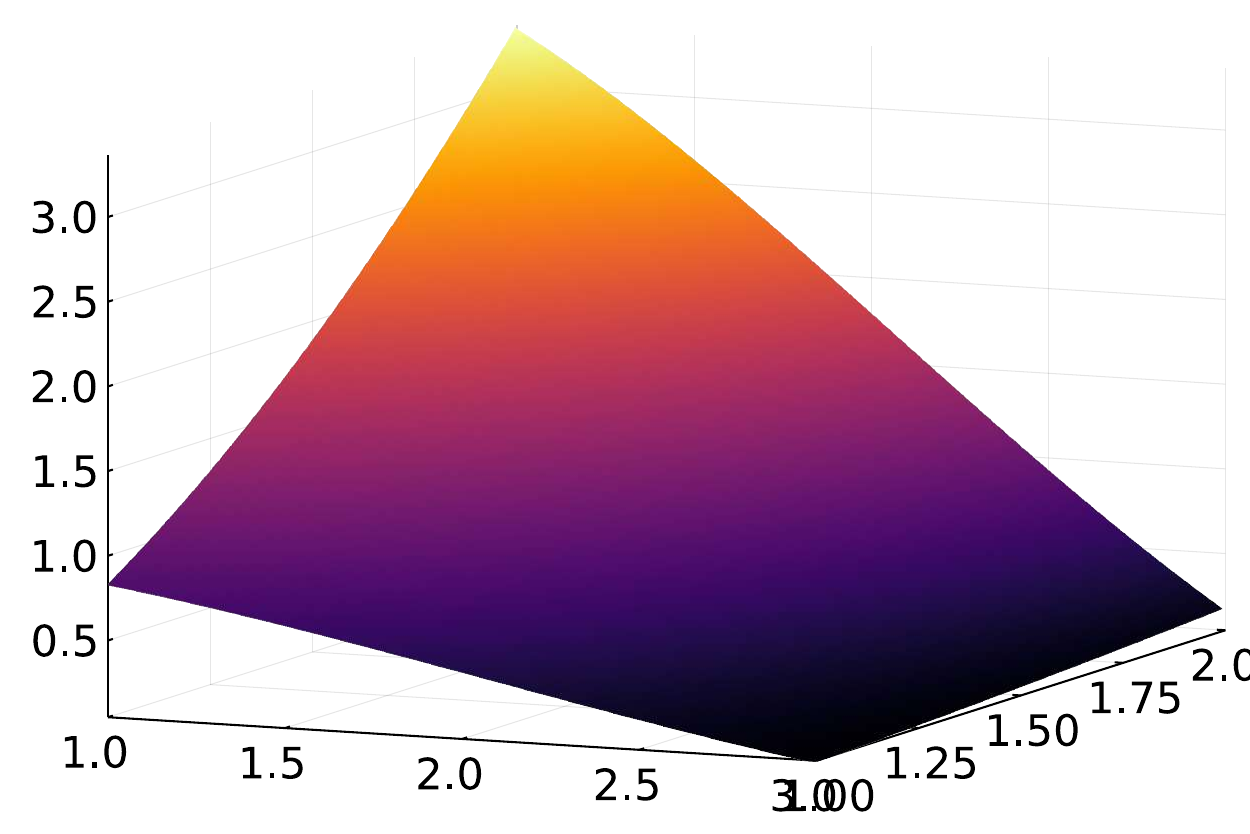}
\caption{$N_4$}\label{heur_figure_benchmark_N4}
\end{subfigure}

\begin{subfigure}[b]{.32\linewidth}
\includegraphics[width=\linewidth]{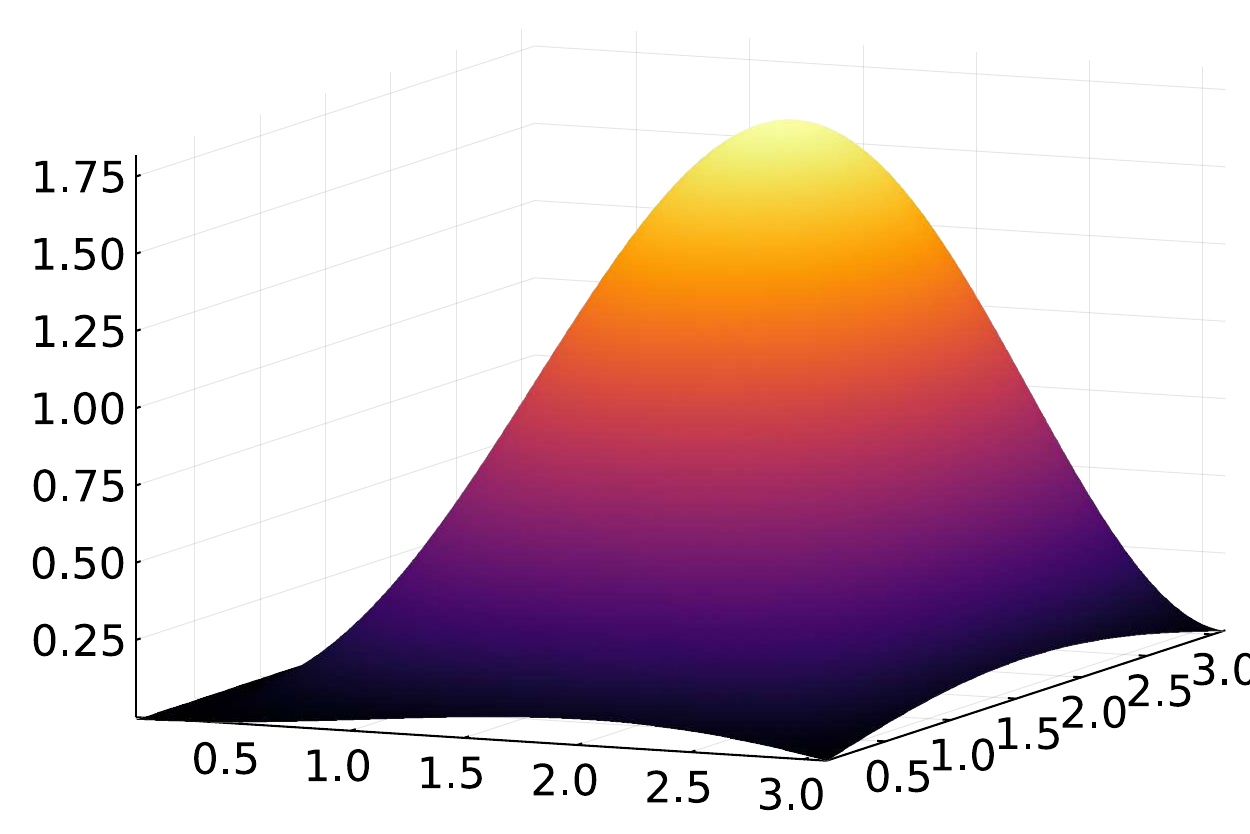}
\caption{$N_5$}\label{heur_figure_benchmark_N5}
\end{subfigure}
\begin{subfigure}[b]{.32\linewidth}
\includegraphics[width=\linewidth]{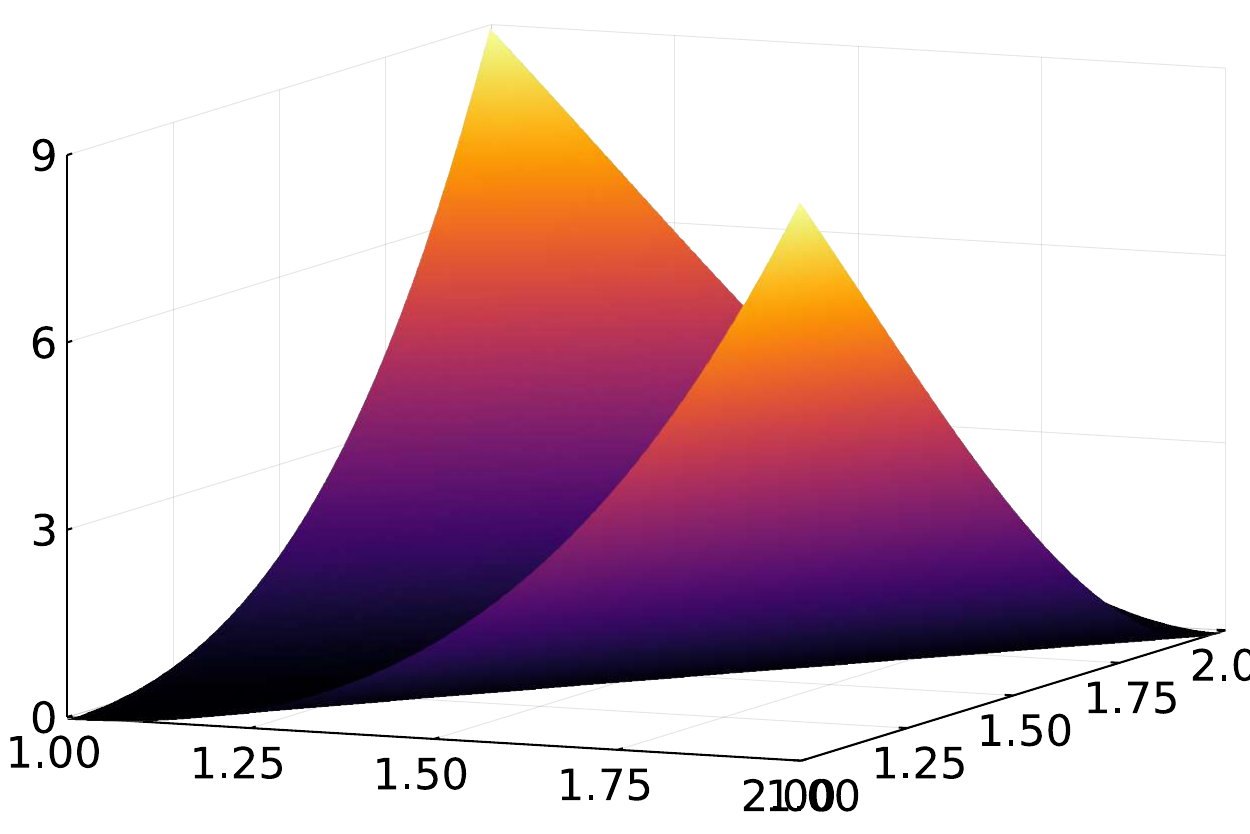}
\caption{$N_6$}\label{heur_figure_benchmark_N6}
\end{subfigure}
\begin{subfigure}[b]{.32\linewidth}
\includegraphics[width=\linewidth]{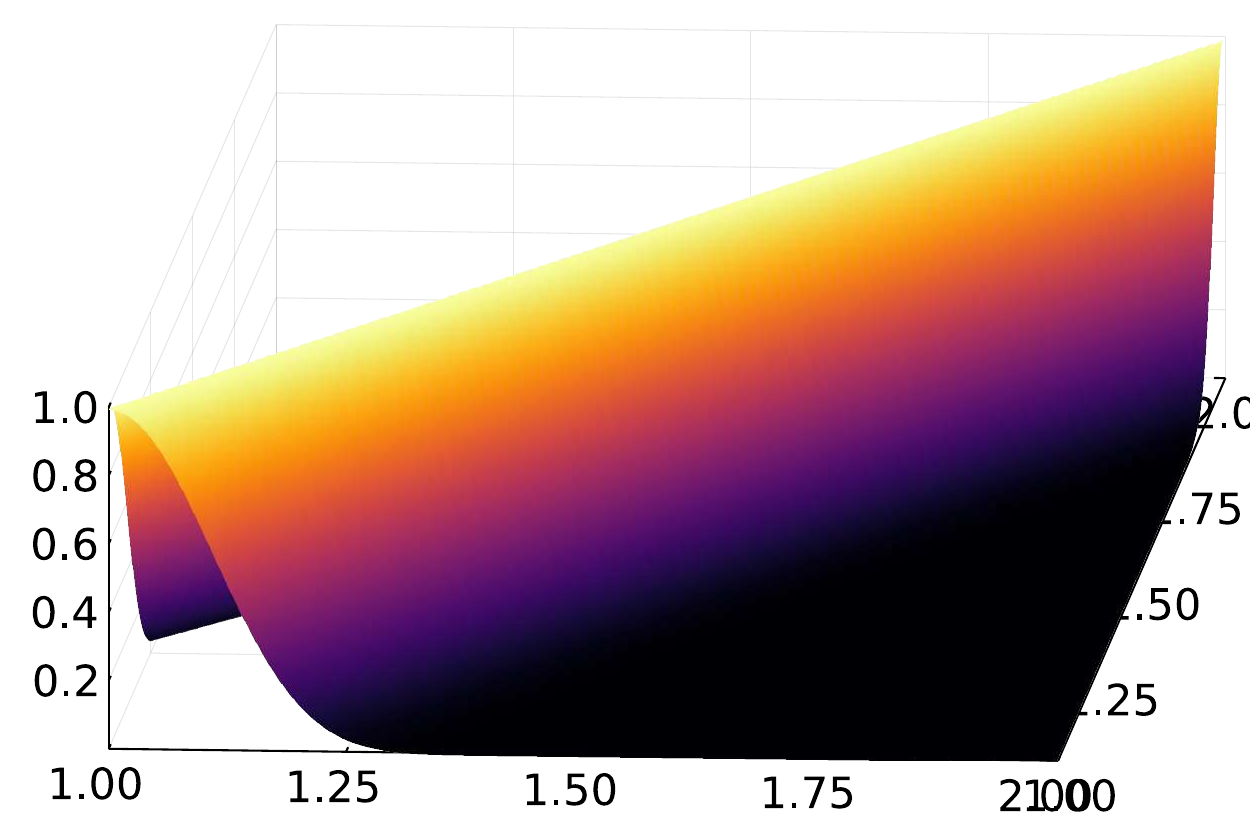}
\caption{$N_7$}\label{heur_figure_benchmark_N7}
\end{subfigure}
\caption{Functions approximated in the instance set}
\label{heur_figure_benchmark}
\end{figure}


The implementation is in Julia 1.12.5 \citep{Bezanson17julia}. We use the library JuMP 1.30.0 as modeling language for MILPs and LPs. GUROBI 13.0.1 is used for solving MILPs, while HiGHS 1.14.0 is used for solving LPs. All experiments were carried out on a single core of a 2.6 GHz Intel Xeon Gold 6126 processor, with 16 GB of RAM.

The Table \ref{table_parameter_values} presents the parameter values for the different algorithms.
\begin{table}[]
    \centering
    \begin{tabular}{ccc}
         name & value & used in \\ \midrule
         $\alpha$ & $2^{\frac{1}{3}}$ & \algmilp{}, \algcolor{}, \algmaximum{}, \algmaximal{} \\
         $p_0$ & $100$ & "\\
         $p_{add}$ & $20$ & "\\
         computation time & \SI{3600}{s} & " \\
         $k_{repetition}$ & $1$ & \algmilp{} and \algcolor{} \\
         $n_{MaximalCliques}$ & $100$ & \algmilp{} and \algmaximal{} \\
         $N_{samples}$ & $100$ & Computation of edges of two vertices, see Section \ref{subsec:construction_aretes_2} \\
         $n^S_{edge}$ & $16$ & Sampling the Domain for the $\mathbb{R}^2$-LDRFP \eqref{eq_R2-DRFP}, see Appendix \ref{sec:construction_S} \\
         $n^S_{int}$ & $200$ & "
    \end{tabular}
    \caption{Parameter values}
    \label{table_parameter_values}
\end{table}
In particular, preliminary experiments showed that the influence of the parameter $k_{repetition}$ taken in $\{0,1,4,16\}$ was small. It might be because too few were derived, or the hypergraph coloring constraints involving at least three vertices correspond to weak valid inequalities compared to hypergraph coloring constraints of two vertices, i.e., graph coloring constraints. A more detailed study of the strength of those valid inequalities is an interesting future research direction but is out of the scope of this work.

\subsection{Numerical Results from the Lower Bounding Algorithms} \label{subsection_results_lowerbounds}

Table \ref{table_all_relaxations} presents the lower bounds found by the algorithms \algmilp{}, \algcolor{}, \algmaximum{} and \algmaximal{} from section \ref{subsec:algo_relaxations}. The first row gives the name of the method. For \algmilp{}, the second row of the table specifies whether it is the complete algorithm or one of its two simplified variants ('-Preproc.' and '-Ineq.,-Preproc.') used to illustrate the influence of hypergraph coloring constraints on the quality of the bound.
Both variants employ the MILP formulation \eqref{model_DCFP} as a subalgorithm. However, unlike the complete algorithm, neither variant leverages maximal cliques to tighten the lower bounds before solving the MILP. As a result, constraints \eqref{MILP_fix_binary1} and \eqref{MILP_fix_binary0}, which fix specific binary variables based on maximal clique information, are excluded. This omission is reflected in the variants’ nomenclature by the suffix '-Preproc'. The variant labelled '-Ineq.,-Preproc.' further distinguishes itself from '-Preproc.' by excluding the hypergraph coloring constraints \eqref {MILP_edge_description}.

The remainder of Table \ref{table_all_relaxations} is organized as follows: the row labeled '\# seeds’ indicates the number of times each algorithm is executed for the same instance with different random seeds. Each algorithm is run once if there is no variability, and ten times if variability is present, which is the case for hyperedges generation or in the construction of a maximal clique. Subsequent rows present, for each instance, the (arithmetic) average lower bound obtained by the algorithms, with the maximum lower bound specified in parentheses when it is different from the average. The final row reports the geometric mean of the averaged results, along with the number of instances for which each algorithm achieves the best lower bound among the four compared methods. Instances marked with symbol $^\times$ were terminated prematurely due to the MILP solver reaching the memory limit.

\begin{table}
    \caption{Average lower bounds produced by the algorithms resulting from the four relaxations}
    \label{table_all_relaxations}
    \centering
    \scriptsize
    \begin{adjustbox}{max width=\textwidth}
    \begin{tabular}{cccccccc} \toprule
        ref & $\delta$ & \multicolumn{3}{c}{\algmilp{}} & \algcolor{} & \algmaximum{} & \algmaximal{} \\ 
        & & -Ineq.,-Preproc. & -Preproc. & complete & \\
        & \# seeds & 1 & 1 & 10 & 10 & 1 & 10 \\ \midrule
 & 1.5 & 3 & {\bf 4} & {\bf 4} & 3 & 3 & 3 \\ 
 & 1.0 & 3 & 5 & {\bf 6} & 5 & 4 & 5 \\ 
$L_1$ & 0.5 & 3 & 8 & {\bf 8.8} (9) & 8 & 7 & 8 \\ 
 & 0.25 & 3 & 10 & 12 & {\bf 13.7} (14) & 12 & 12 \\ 
 & 0.1 & 3 & 18 & 22 & 20.7 (23) & {\bf 23} & 21 \\ \midrule 
 & 1.5 & 3 & {\bf 5} & {\bf 5} & {\bf 5} & {\bf 5} & {\bf 5} \\ 
 & 1.0 & 3 & {\bf 6} & {\bf 6} & {\bf 6} & {\bf 6} & {\bf 6} \\ 
$L_2$ & 0.5 & 3 & 8 & {\bf 10} & {\bf 10} & 9 & 9 \\ 
 & 0.25 & 3 & 15 & 17 & {\bf 18} & 17 & 15 \\ 
 & 0.1 & 3 & 21 & 34 & 32 & {\bf 35} & 29 \\ \midrule 
 & 1.0 & {\bf 3} & {\bf 3} & {\bf 3} & 2 & 2 & 2 \\ 
 & 0.5 & 3 & {\bf 4} & {\bf 4} & 3 & 3 & 3 \\ 
$N_1$ & 0.25 & 3 & 5 & {\bf 6} & 5 & 4 & 4 \\ 
 & 0.1 & 3 & 7 & {\bf 9} & {\bf 9} & 7 & 8 \\ 
 & 0.05 & 3 & 12 & 12 & {\bf 13} & 11 & 12 \\ \midrule 
 & 0.1 & {\bf 1} & {\bf 1} & {\bf 1} & {\bf 1} & {\bf 1} & {\bf 1} \\ 
 & 0.05 & {\bf 2} & {\bf 2} & {\bf 2} & {\bf 2} & {\bf 2} & {\bf 2} \\ 
$N_2$ & 0.03 & {\bf 3} & {\bf 3} & {\bf 3} & {\bf 3} & {\bf 3} & {\bf 3} \\ 
 & 0.01 & 3 & 5 & {\bf 6} & 5 & 4 & 5 \\ 
 & 0.001 & 3 & 15 & {\bf 26} & 23 & {\bf 26} & 22 \\ \midrule 
 & 1.0 & {\bf 2} & {\bf 2} & {\bf 2} & {\bf 2} & {\bf 2} & {\bf 2} \\ 
 & 0.5 & {\bf 3} & {\bf 3} & {\bf 3} & {\bf 3} & {\bf 3} & {\bf 3} \\ 
$N_3$ & 0.25 & 3 & {\bf 4} & {\bf 4} & {\bf 4} & 3 & 3 \\ 
 & 0.1 & 3 & 6 & {\bf 8} & 7 & 6 & 7 \\ 
 & 0.05 & 3 & 9 & 10.9 (11) & {\bf 11} & 10 & 10 \\ \midrule 
 & 0.5 & {\bf 2} & {\bf 2} & {\bf 2} & 1 & 1 & 1 \\ 
 & 0.25 & {\bf 2} & {\bf 2} & {\bf 2} & {\bf 2} & {\bf 2} & {\bf 2} \\ 
$N_4$ & 0.1 & 3 & {\bf 4} & {\bf 4} & {\bf 4} & 3 & 3 \\ 
 & 0.05 & 3 & 5 & {\bf 6} & {\bf 6} & 4 & 5 \\ 
 & 0.03 & 3 & 6 & {\bf 8} & {\bf 8} & 6 & 7 \\ \midrule 
 & 1.0 & {\bf 1} & {\bf 1} & {\bf 1} & {\bf 1} & {\bf 1} & {\bf 1} \\ 
 & 0.5 & 3 & {\bf 4} & {\bf 4} & 3 & 3 & 3 \\ 
$N_5$ & 0.25 & 3 & {\bf 4} & {\bf 4} & {\bf 4} & {\bf 4} & {\bf 4} \\ 
 & 0.1 & 3 & 8 & 9 & {\bf 9.4} (10) & 8 & 8 \\ 
 & 0.05 & 3 & 12 & {\bf 14} & 13 & 13 & 13 \\ \midrule 
 & 1.0 & {\bf 3} & {\bf 3} & {\bf 3} & {\bf 3} & {\bf 3} & {\bf 3} \\ 
 & 0.5 & 3 & {\bf 4} & {\bf 4} & {\bf 4} & {\bf 4} & {\bf 4} \\ 
$N_6$ & 0.25 & 3 & {\bf 5} & {\bf 5} & {\bf 5} & {\bf 5} & {\bf 5} \\ 
 & 0.1 & 3 & 8 & {\bf 9} & 8 & 8 & 8 \\ 
 & 0.05 & 3 & 12 & 13.5 (14) & {\bf 15} & 12 & 14 \\ \midrule 
 & 1.0 & {\bf 1} & {\bf 1} & {\bf 1} & {\bf 1} & {\bf 1} & {\bf 1} \\ 
 & 0.5 & ~~{\bf 1}$^\times$ & {\bf 1} & {\bf 1} & {\bf 1} & {\bf 1} & {\bf 1} \\ 
$N_7$ & 0.25 & {\bf 3} & ~~1$^\times$ & {\bf 3} & {\bf 3} & {\bf 3} & {\bf 3} \\ 
 & 0.1 & 3 & {\bf 5} & {\bf 5} & 4.1 (5) & 4 & 4 \\ 
 & 0.05 & 3 & {\bf 7} & {\bf 7} & {\bf 7} & {\bf 7} & {\bf 7} \\ \midrule 
\multicolumn{2}{c}{geometric mean} & 2.625 & 4.636 & 5.206 & 4.876 & 4.549 & 4.642 \\
\multicolumn{2}{c}{\# best} & 13 & 24 & 37 & 29 & 20 & 17 \\ \bottomrule
    \end{tabular}
    \end{adjustbox}
\end{table}

\emph{Impact of the hypergraph coloring constraints.} Variant '-Preproc.' outperforms variant '-Ineq.,-Preproc.' for all instances except for one instance where '-Preproc.' ran out of memory.  The best lower bound obtained by '-Ineq.,-Preproc.' never exceeds 3, which is an order of magnitude lower than the best lower bound of 21 achieved by '-Preproc.'. These results confirm that incorporating hypergraph coloring constraints has a positive and substantial impact on the quality of the lower bounds obtained for the \RCFP{}. This empirical improvement likely extends to instances of the \RmCFP{} as well.

\emph{Overall performance of the lower bounding algorithms} Among the four lower bounding algorithms introduced in this paper, \algmilp{}  produces the best lower bound for 37 out of 45 instances, and achieves the highest geometric mean. The remaining algorithms, in descending order of the number of best lower bounds found, are: \algcolor{}, \algmaximum{} and \algmaximal{}. If we consider the geometric mean instead, the order is the same except that \algmaximum{} and \algmaximal{} are exchanged. It indicates that the former method finds best lower bounds among the four algorithms more often than the latter method, but also more often worse lower bounds than the latter. 
The results indicate that tighter relaxations generally produce superior lower bounds with the current time limit. However, in the eight cases where \algmilp{} does not perform best, $\delta$ is among the two smallest values. This suggests that, when $\delta$ is small, which results in larger models, the computational cost of solving the tightest relaxation models may be prohibitive, rendering simpler or smaller models more practical.

Finally, looking at the results instance by instance, remark that the maximum lower bound differ from the average only in few cases (7 out of 135), indicating that the variability due to the random choices is not important. Specifically, there is no variability in the results of \algmaximal{}.

\subsection{Numerical results from the Heuristics of the Literature} \label{subsection_results_heuristics}
\begin{table}
    \caption{Comparison of solution values of the \RCFP{} for methods of the literature with absolute approximation error $\delta$}
    \label{heur_table_comparaison_2DPWL_heuristics}
    \centering
    \scriptsize
    \resizebox{0.6\textheight}{!}{
    \begin{tabular}{cccccccccc}
         \toprule 
         ref & $\delta$ & DLN95 & DLN99 & RK2D & RK1D & \iLinA{} & KL21 & KL23-LP & KL23-LPrelax \\ \midrule
         & 1.5 & 6 & 6 & 16 & 6 & {\bf5} & 6 & 6 & 6 \\ 
         & 1.0 & 8 & 8 & 20 & 8 & {\bf6} & {\bf6} & 10 & 8 \\ 
        $L_1$ & 0.5 & 16 & 16 & 48 & 15 & {\bf12} & TL & 18 & 16 \\ 
         & 0.25 & 36 & 34 & 80 & 32 & {\bf21} & TL & 36 & 32 \\ 
         & 0.1 & 85 & 83 & 224 & 60 & {\bf55} & TL & 98 & 83 \\ \midrule 
         & 1.5 & 6 & 6 & 24 & 6 & {\bf5} & TL & 7 & 6 \\ 
         & 1.0 & 9 & 8 & 28 & 8 & {\bf6} & {\bf6} & 11 & 7 \\ 
        $L_2$ & 0.5 & 24 & 25 & 84 & 15 & {\bf12} & TL & 20 & 14 \\ 
         & 0.25 & 42 & 41 & 121 & 32 & {\bf21} & TL & TL & 25 \\ 
         & 0.1 & 120 & 118 & 351 & 60 & {\bf55} & TL & TL & TL \\ \midrule 
         \multicolumn{2}{c}{\# best} & 0 & 0 & 0 & 0 & {\bf10} & 2 & 0 & 1 \\ \bottomrule
    \end{tabular}}
    \vspace{0.2cm}

        \resizebox{0.6\textheight}{!}{
    \begin{tabular}{cccccccccc}
         \toprule 
         ref & $\delta$ & DLN95 & DLN99 & RK2D & RK1D & KL21 & KL23-LP & KL23-LPrelax \\ \midrule
            & 1.0 & {\bf3} & {\bf3} & 4 & 6 & 4 & 6 & 4 \\ 
         & 0.5 & {\bf6} & {\bf6} & 12 & 8 & {\bf6} & 10 & 10 \\  
        $N_1$ & 0.25 & {\bf12} & {\bf12} & 20 & 18 & {\bf12} & 16 & 14 \\ 
         & 0.1 & {\bf26} & {\bf26} & 59 & 45 & TL & 36 & 30 \\ 
         & 0.05 & 52 & {\bf49} & 94 & 98 & TL & 64 & 57 \\ \midrule 
         & 0.1 & {\bf1} & {\bf1} & 2 & 4 & {\bf1} & {\bf1} & {\bf1} \\ 
         & 0.05 & {\bf2} & {\bf2} & 6 & 9 & {\bf2} & 3 & 4 \\ 
        $N_2$ & 0.03 & {\bf4} & {\bf4} & 10 & 12 & {\bf4} & 6 & {\bf4} \\ 
         & 0.01 & 11 & {\bf10} & 31 & 30 & TL & 19 & 18 \\ 
         & 0.001 & 89 & {\bf87} & 350 & 270 & TL & TL & 181 \\ \midrule 
         & 1.0 & 3 & 3 & 5 & 12 & TL & {\bf2} & 3 \\ 
         & 0.5 & 4 & {\bf3} & 8 & 16 & TL & 6 & {\bf3} \\ 
        $N_3$ & 0.25 & {\bf5} & {\bf5} & 16 & 24 & 8 & 10 & 8 \\ 
         & 0.1 & 14 & {\bf13} & 44 & 72 & TL & 15 & 19 \\ 
         & 0.05 & 28 & {\bf27} & 85 & 125 & TL & 36 & {\bf27} \\ \midrule 
         & 0.5 & {\bf2} & {\bf2} & {\bf2} & 3 & {\bf2} & {\bf2} & {\bf2} \\ 
         & 0.25 & {\bf3} & {\bf3} & 4 & 10 & 4 & 4 & 4 \\ 
        $N_4$ & 0.1 & {\bf6} & {\bf6} & 9 & 28 & {\bf6} & {\bf6} & {\bf6} \\ 
         & 0.05 & 12 & {\bf11} & 23  & 27 & 12 & 14 & 14 \\ 
         & 0.03 & 18 & {\bf17} & 40 & 48 & TL & 25 & 26 \\ \midrule 
         & 1.0 & {\bf1} & {\bf1} & 6 & 20 & {\bf1} & {\bf1} & {\bf1} \\ 
         & 0.5 & 5 & 5 & 6 & 42 & {\bf4} & {\bf4} & {\bf4} \\ 
        $N_5$ & 0.25 & 12 & 14 & 21 & 80 & {\bf5} & 13 & 6 \\ 
         & 0.1 & 24 & 25 & 96 & 168 & TL & 24 & {\bf21} \\ 
         & 0.05 & 49 & 46 & 274 & 340 & TL & {\bf43} & 44 \\ \midrule 
         & 1.0 & {\bf3} & {\bf3} & 6 & 6 & {\bf3} & {\bf3} & {\bf3} \\ 
         & 0.5 & {\bf4} & {\bf4} & 9 & 7 & {\bf4} & {\bf4} & {\bf4} \\ 
        $N_6$ & 0.25 & 7 & 7 & 12 & 18 & {\bf6} & 14 & 10 \\ 
         & 0.1 & 14 & {\bf13} & 40 & 40 & TL & 22 & 16 \\ 
         & 0.05 & 27 & {\bf26} & 87 & 83 & TL & 50 & 38 \\ \midrule 
         & 1.0 & {\bf1} & {\bf1} & 2 & 3 & {\bf1} & {\bf1} & {\bf1} \\ 
         & 0.5 & {\bf2} & {\bf2} & 4 & 4 & {\bf2} & {\bf2} & {\bf2} \\ 
        $N_7$ & 0.25 & 4 & {\bf3} & 6 & 7 & 4 & 19 & 12 \\ 
         & 0.1 & 7 & 7 & 84 & 18 & {\bf5} & 19 & 8 \\ 
         & 0.05 & {\bf11} & {\bf11} & 86 & 34 & TL & 19 & 14 \\ \midrule
         \multicolumn{2}{c}{\# best} & 17 & {\bf28} & 1 & 0 & 16 & 11 & 13 \\ \bottomrule
    \end{tabular}}
\end{table}

Table \ref{heur_table_comparaison_2DPWL_heuristics} gives the number of pieces of feasible solutions found by methods of the literature or from Section \ref{sec:new_UB}. The first row gives the name of the method. DLN95 and DLN99 are the improvements of the method from \citet{Duguet22a} as described in Section \ref{sec:new_UB}, with bounding efficiency $\eta = 0.95$ and $0.99$ respectively. RK2D is Algorithm 2.1 from \citet{Rebennack15a}.  \iLinA{}, described in Section \ref{sec:new_UB}, is the improvement from the method LinA2D obtained by substituting the algorithm for \rCFP{} of \citet{Codsi25} in the algorithm of Section 3 of \citet{Rebennack15a} denoted RK1D.
KL21 is the method from \citet{Kazda21} and KL23-LP and KL23-LPrelaxed are the two methods from \citet{Kazda23}. Numbers in bold represent the best upper bound known for each instance. TL means that no feasible solutions were found in the time limit.

Results show that all of the best known solutions for linearly separable instances are produced by \iLinA{} whereas 80\% of the best known solutions for nonlinearly separable instances, are produced by DLN99.

\subsection{Optimal Solutions of the \RCFP{}} \label{subsection_optimal_solutions}

This work describes the first algorithms for establishing lower bounds to the \RmCFP{}.
Combining the lower bounds from Section \ref{subsection_results_lowerbounds} and upper bounds from Section \ref{subsection_results_heuristics} allows to prove for the first time that some nontrivial solutions are optimal solutions of the \RCFP{}. Table \ref{table_LB_UB} shows the best lower and upper bounds found in Tables \ref{table_all_relaxations} and \ref{heur_table_comparaison_2DPWL_heuristics}. If the upper bound is equal to the lower bound then the instance is solved to optimality. In this case, the upper bound is in bold and is followed by a *. If the difference between the upper and lower bound is equal to 1, a $\dagger$ symbol is added after the upper bound. %

\begin{table}
    \caption{Best known lower and upper bounds of the \RCFP{} \\
    symbols: *: the instance is solved to optimality; $\dagger$ the solution is proven to be at most one unit far from the optimum.}
    \label{table_LB_UB}
    \scriptsize
    \centering
    \begin{tabular}{ccrl} \toprule
        f & $\delta$ & $\mathit{LB}$ & $\mathit{UB}$\\ \midrule
         & 1.5 & 4 & ~$5^{\dagger}$ \\ 
         & 1.0 & \textbf{6} & ~$\mathbf{6^*}$ \\ 
        $L_1$ & 0.5 & 9 & 12 \\
         & 0.25 & 14 & 21 \\ 
         & 0.1 & 23 & 55 \\ \midrule 
         & 1.5 & \textbf{5} & ~$\mathbf{5^*}$ \\ 
         & 1.0 & \textbf{6} & ~$\mathbf{6^*}$ \\ 
        $L_2$ & 0.5 & 10 & 12 \\ 
         & 0.25 & 18 & 21 \\ 
         & 0.1 & 35 & 55 \\ \midrule 
         & 1.0 & \textbf{3} & ~$\mathbf{3^*}$ \\ 
         & 0.5 & 4 & ~6 \\ 
        $N_1$ & 0.25 & 6 & 12 \\ 
         & 0.1 & 9 & 26 \\ 
         & 0.05 & 13 & 49 \\ \midrule 
         & 0.1 & \textbf{1} & ~$\mathbf{1^*}$ \\ 
         & 0.05 & \textbf{2} & ~$\mathbf{2^*}$ \\ 
        $N_2$ & 0.03 & 3 & ~$4^{\dagger}$ \\ 
         & 0.01 & 6 & 10 \\ 
         & 0.001 & 26 & 87 \\ \midrule 
         & 1.0 & \textbf{2} & ~$\mathbf{2^*}$ \\ 
         & 0.5 & \textbf{3} & ~$\mathbf{3^*}$ \\ 
        $N_3$ & 0.25 & 4 & ~$5^{\dagger}$ \\ 
         & 0.1 & 8 & 13 \\ 
         & 0.05 & 11 & 27 \\ \midrule 
         & 0.5 & \textbf{2} & ~$\mathbf{2^*}$ \\ 
         & 0.25 & 2 & ~$3^{\dagger}$ \\ 
        $N_4$ & 0.1 & 4 & ~6 \\ 
         & 0.05 & 6 & 11 \\ 
         & 0.03 & 8 & 17 \\ \midrule 
         & 1.0 & \textbf{1} & ~$\mathbf{1^*}$ \\ 
         & 0.5 & \textbf{4} & ~$\mathbf{4^*}$ \\ 
        $N_5$ & 0.25 & 4 & ~$5^{\dagger}$ \\ 
         & 0.1 & 10 & 21 \\ 
         & 0.05 & 14 & 43 \\ \midrule 
         & 1.0 & \textbf{3} & ~$\mathbf{3^*}$ \\ 
         & 0.5 & \textbf{4} & ~$\mathbf{4^*}$ \\ 
        $N_6$ & 0.25 & 5 & ~$6^{\dagger}$ \\ 
         & 0.1 & 9 & 13 \\ 
         & 0.05 & 15 & 26 \\ \midrule 
         & 1.0 & \textbf{1} & ~$\mathbf{1^*}$ \\ 
         & 0.5 & 1 & ~$2^{\dagger}$ \\ 
        $N_7$ & 0.25 & \textbf{3} & ~$\mathbf{3^*}$ \\ 
         & 0.1 & \textbf{5} & ~$\mathbf{5^*}$ \\ 
         & 0.05 & 7 & 11 \\ \bottomrule 

    \end{tabular}
\end{table}

In total, $16$ instances were proven optimal, including $13$ with a nontrivial lower bound strictly greater than $1$. In addition, for 7 instances the difference between the lower and the upper bound is 1. Thus 23 instances out of 45 are solved or 'the closest' from being solved. The optimal solutions come mostly from ``small'' instances, i.e., they have an optimal solution with at most $6$ pieces. Of course, a mathematical analysis of each instance could probably prove the optimality of some more of them. 
However, the objective of our work is to find optimal solutions with a generic algorithm instead of a theoretical function-specific analysis.

The largest bound ratio $\mathit{UB}/\mathit{LB}$ is $49/13 \approx 3.8$ and occurs for the instance with function $N_1(x,y) = xy$ and $\delta=0.05$.
It would be interesting to find a nonlinear function for which we know the minimum number of pieces for very different values of $\delta$ in order to observe the differences with the lower and upper bounds. This could provide useful insights for their improvement, as well as an idea of whether it is the upper bound or the lower bound that is the furthest away from the optimal value.

While the theoretical foundation underlying the lower bounding algorithms is generic, our current implementations are restricted to the \RCFP{}. A generic implementation is a potential future direction, but only after addressing scalability issues. Indeed, in the two-variable case (i.e., dimension $2$), we have proven optimality for small instances (i.e., with up to six pieces).
Moving to higher dimensions usually requires a greater number of linear pieces to maintain the same final precision. In addition, an increase in dimensionality may necessitate an increase in the number of points required for the domain discretization. This leads to larger hypergraphs and, consequently, larger subproblems that must be solved at each iteration. Therefore, improving scalability is a prerequisite before an implementation to address a dimension greater than two.

\section{Conclusion}
\label{s:theconclusion}
This paper investigates the first lower bounds for the Corridor Fitting Problem as well as new upper bounds for the two-variable case. The lower bounds are based on a discretization of the corridor domain, and on the concept of not linearly compatible sets: sets of points of the domain that cannot be covered by the same linear piece. Each not linearly compatible set corresponds to a valid inequality for the problem. It allows us to derive a relaxation that exhibits the structure of a hypergraph coloring problem but with a number of hyperedges exponential in the number of nodes. It is leveraged to derive four further relaxations that can be sorted depending on the amount of information of the original Corridor Fitting Problem that is used, and give rise to four different lower bounding algorithms for the problem.

Experimental comparisons on the classic instances of the literature on the Corridor Fitting Problem for functions of two variables established that the more information the algorithm uses, the better lower bound it produces in average. Finally, we compared the lower bounds obtained with the best upper bound from the literature or from this work. This allowed to prove that for more than half of the instances, the solution is optimal or is at most one unit away from the optimum.

Improving scalability is a prerequisite to the viability of implementing algorithms for dimensions greater than two.
We propose two directions to improve the computations of lower bounds for the Corridor Fitting Problem. First, adding points in the domain discretization during optimization as in \citet{Kazda21} looks promising. 
Second, a branch-and-cut framework should make a better use of the valid inequalities that are the hypergraph coloring constraints, as only those cutting off an infeasible solution for the Corridor Fitting Problem would be added.

\section*{Acknowledgements}

The first author has been funded by a PhD grant from the Ministère de l'Enseignement Supérieur et de la Recherche in France (MESR) as well as a grant from the Deutsche Forschungsgemeinschaft
(DFG) in the project 543678993 (Aggregative gemischt-ganzzahlige
Gleichgewichtsprobleme: Existenz, Approximation und Algorithmen) in Germany.
The third author have been funded by the Responsible Retail Chair (Chaire de formation et de recherche retail responsable. \url{https://sites.laas.fr/projects/ChaireRetailResp/}).
The computations were executed on the high performance cluster
``Elwetritsch'' at the TU Kaiserslautern, which is part of the
``Alliance of High Performance Computing Rheinland-Pfalz'' (AHRP).
We kindly acknowledge the support of the MESR, the DFG, the Responsible Retail Chair and the AHRP.

\bibliographystyle{plainnat}
\bibliography{article_library.bib}

\appendix
\section{Complexity Proof of Proposition \ref{prop_complexity_relaxations}} \label{section_complexity_proof}

\begin{proof} \label{proof_complexity_relaxations}
    We start by showing that \relaxRonestrong{} is NP-hard. To do so, we rely on the fact that the decision variant of the graph coloring problem with at least 3 colors is NP-complete.
    To prove that \relaxRonestrong{} is NP-hard it is therefore sufficient to show that the graph coloring problem can be reduced to \relaxRonestrong{}. For the reduction, we have to show that given an instance of the graph coloring problem described by the graph $G=(V,E)$, there exists an instance $I$ of \relaxRonestrong{} so that a solution of instance $I$ corresponds to a solution of the instance of graph coloring. 
    The instance $I$ of \relaxRonestrong{} is described by:
    \begin{itemize}
        \item a corridor $\C = \mathit{Corridor}(u,l,D)$,
        \item a graph $H=(X,\hat{E})$ that is a (potentially incomplete) incompatibility hypergraph of $\C$, 
        \item a set $X \subset D$ with $|X|=|V|$.
    \end{itemize}
    We now build such an instance $I$.
    Let $m = |V|$.
    Take domain $D = \{x\geq 0 | \sum_{k=1}^m x_k = 1\}$ to be the $m-1$-dimensional simplex. Choose $X$ to contain, for each $i=1,\ldots,m$, the point $x^i$ which is the vector with $0$ coefficients everywhere except a coefficient $1$ in coordinate $i$. Also, define the continuous functions $l$ and $u$ such as: $l(x) = \lVert x \rVert = u(x)-\epsilon$.
    For a small enough $\epsilon$, there is no linear function fitting the $\mathit{Corridor}(u,l,conv(\{x^i,x^j\}))$ for any $x^i,x^j \in X$ with $x^i \neq x^j$ and thus $\{x^i,x^j\}$ is a not linearly compatible set. 
    Finally, define $H$ to be the graph with vertices $X$ and edges $(x^i,x^j)$ if $(i,j)$ is an edge of $G$. Here, we use that $H$ needs not be a complete incompatibility hypergraph. Remark that the graphs $H$ and $G$ are isomorphic.
    
    It remains to show that an optimal solution of the instance of \relaxRonestrong{} corresponds to an optimal solution of the instance of graph coloring. Consider an assignment of colors to points of $X$ satisfying the hypergraph coloring constraints of \relaxRonestrong{} and minimizing the number of colors $n^*$. We denote by $S_j$ the set of points of $X$ assigned to color $j$. We show that associating the linear function $L_j(x) = \sum_{j \in S_j} x_j$ to color $j$ for any $j$ satisfies all the other constraints of \relaxRonestrong{}.
    Constraint \ref{constraint_valid_inequalities} is satisfied because $L_j$ is a linear function. Constraint \ref{constraint_non_intersecting_domains} is satisfied because the domains $D_j = conv(\{x^i \in X | x^i \text{ is of color } j\}$ are of disjoint support. Indeed, consider $y \in D_j$ and $z \in D_{j'}$, $j \neq j'$. The non-zero components of $y$ and $z$ do not have the same indices; thus $y \neq z$. Finally, Constraint \ref{constraint_def_discrete_pointwise} is satisfied because $l(x^i) = 1 = L_j(x^i) < u(x^i)$ for all $i=1,\ldots,m$ and $j=1,\ldots,n^*$. Thus $n^*$ is the optimal value of \relaxRonestrong{}.
    Since the graphs $H$ and $G$ are isomorphic, the corresponding assignment of colors in $G$ is also optimal.
    
    We now prove the other three statements.
    Regarding the proof of NP-hardness of \relaxRone{}, it suffices to show that the graph coloring problem can be reduced to \relaxRone{}. This proof can be adapted from the proof of complexity of \relaxRonestrong{}. Indeed, it is sufficient to consider the same construction of $\mathit{Corridor}(u,l,D)$, graph $H$ and linear functions $L_j$ without the part on the constraints that are not hypergraph coloring constraints.
    As for \relmaximum{}, it suffices to show that the maximum clique problem can be reduced to \relmaximum{}. This proof can also be adapted from the one of \relaxRonestrong{}.
    Finally, \relmaximal{} is in P because it is a special case of the maximal clique problem, which is in P.
\end{proof}

\section{Details on the Lower Bounding Algorithms Leveraging the Relaxations} \label{sec:detail_algorithms}

The algorithms leveraging the relaxations have a similar design. We first present the simplest of the four algorithms while for the others we mainly highlight the differences to the first. Recall that the pseudo-codes are generic but our implementations for the computational experimentation are specific to the two-variable case. 

\subsection{Lower Bounding Algorithm Leveraging Relaxation \relmaximum{} } \label{subsec:algo_clique_maximum}
The algorithm \algmaximum{} computes a lower bound using the relaxation \relmaximum{} for increasingly large incompatibility graphs until the computation time limit is reached. It is described in Algorithm \ref{algmaximum}.

Each iteration starts with the computation of the number of vertices $p_{vertices}$. This number scales affinely with the number of pieces (i.e., the value $\mathit{LB}$) and is multiplied by a coefficient $\alpha$ after every iteration that did not lead to an improvement of the value of LB. The algorithm then generates the incompatibility graph $G=(X,E_2)$ where $X$ is a set of $p_{vertices}$ points of the domain $D$ of the $\mathit{Corridor}(u,l,D)$ computed as explained in Section \ref{subsec:construction_vertices} whereas $E_2$ is the set of size 2 incompatibility edges obtained as explained in Section \ref{subsec:construction_aretes_2}. 
The method \textsc{ComputeMaximumClique} in Line \ref{algo_maximumclique_computemaxclique} computes a maximum clique of the graph $(X,E_2)$ with a state-of-the-art method from \citet[Appendix A.4]{Walteros20}. It is the variant using a binary search of the exact algorithm for the maximum clique problem. It is called with a time limit equal to the remaining time in \algmaximum{}.
The cardinality of the maximum clique found is a valid lower bound of the \RmCFP{}. The best known lower bound is updated if there is an improvement. Otherwise the vertex parameter $k_{optimal}$ is incremented so that the number of vertices in the next iteration is increased.
\begin{algorithm}[!ht]
\textbf{Input:} \\
- $\mathcal{C}=\mathit{Corridor}(u,l,D)$ the instance of the \RmCFP{}  \\
- coefficients $\alpha > 1, p_0, p_{add}$ defining the number of vertices of the incompatibility graph \\
- the computation time limit of the algorithm \\
\textbf{Output:} $\mathit{LB}$ a lower bound on the value of the \RmCFP{} for $\mathit{Corridor}(u,l,D)$

\noindent~Initialization
\begin{algorithmic}[1]

\State $\mathit{LB} = 1$, $k_{optimal} = 0$

\end{algorithmic}

\begin{algorithmic}[1]
\setcounter{ALG@line}{1}
\While{time remaining $> 0$} \label{algo_R2maximum_while}

\noindent \hspace{-0.5cm}Build the graph

    \State $p_{vertices} = \lceil(p_0 + p_{add} \mathit{LB}) \times \alpha^{k_{optimal}}\rceil$ \label{algo_calcul_N_R2maximum}
    \State $X = \textsc{ComputeVertices}(\mathcal{C},p_{vertices})$ \Comment{see Section \ref{subsec:construction_vertices}} \label{algo_calcul_vertices_R2maximum} 
    \State $E_2 = \textsc{ComputeEdgesOfSize2}(\mathcal{C},X)$ \Comment{see Section \ref{subsec:construction_aretes_2}} \label{algo_function_aretes2_R2maximum}

\noindent \hspace{-0.5cm}Solve the relaxation 

    \State $c = \textsc{ComputeMaximumClique}(X,E_2,\text{time remaining})$ \label{line_maximum_clique} \label{algo_maximumclique_computemaxclique} \Comment{see \citep[Appendix A.4]{Walteros20}}

\noindent \hspace{-0.5cm}Update LB or increment the vertex parameter

    \If{$|c| > \mathit{LB} $}
        \State $\mathit{LB} = |c|$ \Comment{update LB}
    \Else
        \State $k_{optimal} = k_{optimal} + 1$ \Comment{increment the vertex parameter}
    \EndIf
\EndWhile
\end{algorithmic}
Return the best lower bound found
\begin{algorithmic}[1]
\setcounter{ALG@line}{12}
\State $\text{Return } \mathit{LB}$
\end{algorithmic}
\caption{\algmaximum{}}
\label{algmaximum}
\end{algorithm}

\subsection{Lower Bounding Algorithm Leveraging Relaxation \relmaximal{} } \label{subsec:algo_clique_maximal}

The algorithm \algmaximal{} computes a lower bound using the relaxation \relmaximal{} for increasingly large incompatibility graphs until the computation time limit is reached. The resulting Algorithm is similar to \algmaximum{}, except for the replacement of Line \ref{algo_maximumclique_computemaxclique} by the following line.
\begin{algorithm}[H]
\noindent Solve the relaxation 
\begin{algorithmic}[1]
\setcounter{ALG@line}{5}
\State $c = \textsc{MaximalCliqueHeuristic}(E_2,n_{maximalCliques})$
\end{algorithmic}
\end{algorithm}
\label{subsec:maximalcliqueheuristic}
Method \textsc{MaximalCliqueHeuristic}$(E_2,n_{maximalCliques})$ computes a maximal clique in a graph with edges $E$. It starts with an empty set of vertices and iteratively expands it with a vertex connected to all vertices already in the set of vertices and having the most edges (ties are broken at random). The process is stopped when no more vertices can be added.
This process is run $n_{maximalCliques}$ times and the largest clique found is returned.

\subsection{Lower Bounding Algorithm Leveraging Relaxation \relaxRone{}} \label{subsec:algo_gcbcfp}
The algorithm \algcolor{} computes a lower bound using the relaxation \relaxRone{} for increasingly large hypergraphs until the computation time limit is reached. It is described in Algorithm \ref{algcolor}.
\begin{algorithm}[!ht]
\textbf{Input:} \\
- $\mathcal{C}=\mathit{Corridor}(u,l,D)$ the instance of the \RmCFP{}  \\
- coefficients $\alpha > 1, p_0, p_{add}$ defining the number of vertices of the hypergraph \\
- the number of attempts $k_{repetition}$ to create edges connecting more than two vertices for each vertex of the hypergraph $(X,E)$ \\
- the computation time limit of the algorithm \\
\textbf{Output:} $\mathit{LB}$ a lower bound on the value of the \RmCFP{} for $\mathit{Corridor}(u,l,D)$

\noindent Initialization
\begin{algorithmic}[1]
\State $\mathit{LB} = 1$, $k_{optimal} = 0$
\While{time remaining $> 0$}

\noindent \hspace{-0.5cm}Build the incompatibility hypergraph

    \State $p_{vertices} = \lceil(p_0 + p_{add} \mathit{LB}) \times \alpha^{k_{optimal}}\rceil$ \label{algo_N_gc}
    \State $X = \textsc{ComputeVertices}(\mathcal{C},p_{vertices})$ \Comment{see Section \ref{subsec:construction_vertices}}
    \State $E_2 = \textsc{ComputeEdgesOfSize2}(\mathcal{C},X)$ \Comment{see Section \ref{subsec:construction_aretes_2}}
    \State $E_{3^+} = \textsc{ComputeEdgesOfSize3$^+$}(\C,X,E_2,p_{vertices},k_{repetition})$ \Comment{see Section \ref{subsec:construction_aretes_multi}}

\noindent \hspace{-0.5cm}Solve the relaxation 
    
    \State $c = \textsc{HypergraphColoring}(X,E_2\cup E_{3^+},\text{time remaining})$ \label{algo_gc-cdcl} \Comment{see Appendix \ref{subsubsec:hypergraphcoloringproblem}}

\noindent \hspace{-0.5cm}Update LB or increment the vertex parameter
    
    \If{$|c| > \mathit{LB} $}
        \State $\mathit{LB} = |c|$ \Comment{update LB}
    \Else
        \State $k_{optimal} = k_{optimal} + 1$ \Comment{increment the vertex parameter}
    \EndIf
\EndWhile
\end{algorithmic}
Return the best lower bound found
\begin{algorithmic}[1]
\setcounter{ALG@line}{13}
\State $\text{Return } \mathit{LB}$
\end{algorithmic}
\caption{\algcolor{} solving relaxation \relaxRone{}}
\label{algcolor}
\end{algorithm}

\subsubsection{Principle}
Similarly to \algmaximum{}, each iteration of \algcolor{} starts with the computation of the number of vertices $p_{vertices}$. The algorithm then generates the incompatibility hypergraph $H=(X,E_2 \cup E_{3^+})$ where $X$ is a set of $p_{vertices}$ points of the domain $D$ of the $\mathit{Corridor}(u,l,D)$, $E_2$ is the set of size 2 incompatibility edges obtained as explained in Section \ref{subsec:construction_aretes_2}, and $E_{3^+}$ is the set of incompatibility edges of size greater or equal to 3, obtained as explained in Section \ref{subsec:construction_aretes_multi}. The hypergraph coloring of $H$ is computed as described in Appendix \ref{subsubsec:hypergraphcoloringproblem}. Its cardinality is a valid lower bound of the \RmCFP{}. The best known lower bound is updated if there is an improvement. Otherwise the vertex parameter $k_{optimal}$ is incremented.

\subsubsection{Method Solving a Hypergraph Coloring Problem} \label{subsubsec:hypergraphcoloringproblem}
This method computes deterministically the chromatic number of the hypergraph $H=(X,E)$ with a time limit equal to the remaining time allocated to the algorithm. It is an adaptation to hypergraph coloring of the graph coloring algorithm $gc-cdcl$ described in \citet{Hebrard20}. It combines constraint programming \citep{Vanhentenryck03} and constraint satisfaction problem techniques \citep{Ghedira13CP} to find the chromatic number of a graph. 
The main modification to adapt the algorithm to hypergraph coloring is to remove, when there is at least one hyperedge, the upper bound computation specific to (non-hypergraph) graph coloring. Another modification is that when a new lower bound of the chromatic number is proven it is saved so that it can be recovered if the algorithm reaches the time limit.

\subsection{Lower Bounding Algorithm Leveraging Relaxation \relaxRonestrong{}} \label{subsection_alg_milp}
Let \relaxRonestrongdecision{} be the decision version of \relaxRonestrong{} for $n$ pieces and hypergraph $H$. The algorithm \algmilp{} produces lower bounds of the \RmCFP{} by solving \relaxRonestrongdecision{} for an increasing number of pieces $n$ and increasingly large hypergraphs. Each \relaxRonestrongdecision{} proven infeasible induces a lower bound $\mathit{LB}= n+1$ of the \RmCFP{}. The remainder of this section first presents the MILP \eqref{model_DCFP} modeling \relaxRonestrongdecision{}, then details \algmilp{}, referenced as Algorithm \ref{algorithm_MILP_GC}, that produces the lower bound by iteratively solving MILPs.

\subsubsection{MILP model \eqref{model_DCFP} for the \relaxRonestrongdecision{}} \label{subsubsec:MILP_model} 

Before describing the MILP model in detail, we give an overview of it. A solution to the MILP represents a \PWL{} function $g$ defined on a subset $D' \subset D$. Function $g$ corresponds to a union of polytopes of $\mathbb R^{m+1}$ (with points $(x,g(x))$ for any $x \in D'$) satisfying the corridor constraints on a finite number of points $X \subset D'$. To that end, there are constraints ensuring a valid $n$-coloring of hypergraph $H$;
some constraints ensure that the corridor constraints are satisfied on $X$;
there are also constraints ensuring the domains of pieces of the generated \PWL{} function are convex and of disjoint interior.
\begin{figure}
    \centering
    \includegraphics[width=0.6\textwidth]{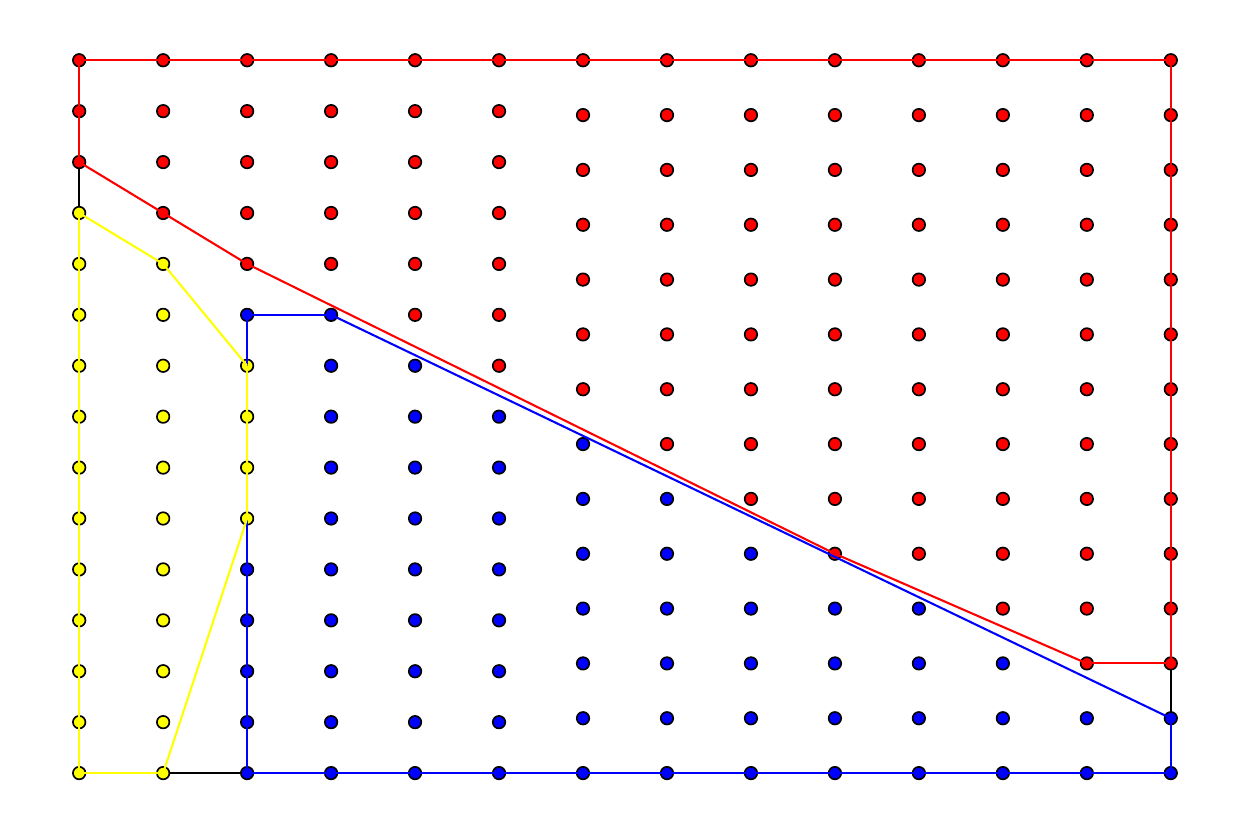}
    \caption{Example of solution of model \eqref{model_DCFP}}
    \label{figure_MILP_coloration_example}
\end{figure}
Figure \ref{figure_MILP_coloration_example} shows an example of solution of the MILP model \eqref{model_DCFP} for $n=3$ colors. The vertices of the hypergraph $H$ are colored yellow, blue or red depending on the piece containing them. The convex hull of the vertices of one color indicates the domain of one piece. The union of those convex hulls form the domain $D' \subset D$.

We now describe the MILP model more in detail.
One of the critical constraints of the MILP \eqref{model_DCFP} is that the domains of pieces of the generated \PWL{} function are convex and of disjoint interior. To that end, we enforce the domains of each pair of pieces to be linearly separable, i.e., there exist a hyperplane separating them.
\begin{definition}[linear separation]
Let $A$,$B \subset \mathbb R^m$. The sets $A$ and $B$ are \textit{linearly separable} if there exist a hyperplane $\alpha x = \beta$, $\alpha \in \mathbb R^m \setminus 0$ and $\beta \in \mathbb R$ such that $A$ is in the half-space $\alpha x \leq \beta$ and $B$ is in the half-space $\alpha x \geq \beta$.
\end{definition}

\begin{table}
    \centering
    \caption{Parameters of the MILP model}
    \label{MILP_parameters}
    \begin{adjustbox}{max width=\textwidth}
    \begin{tabular}{ll} \toprule
        parameters & purpose \\ \midrule
        $\mathit{Corridor}(u,l,D)$ & corridor of the \RmCFP{} \\ 
        $p_{vertices}$ & number of vertices of the hypergraph $H=(X,E)$ \\ 
        n & number of colors to use in hypergraph coloring \\ 
        $x,y$ & vectors containing the positions in $(x_i,y_i) \in D$ of the vertices of $H$ \\ 
        A & set containing all computed incompatibility edges \\ 
        $MaximalClique$ & maximal clique computed with the edges connecting two vertices of $H$ \\ \bottomrule
    \end{tabular}
    \end{adjustbox}
\end{table}

\begin{table}
    \centering
    \caption{Variables of the MILP model}
    \label{MILP_variables}
    \resizebox{\textwidth}{!}{
    \begin{tabular}{lll} \toprule
        variable & indices & purpose \\ \midrule
        \multirow{2}{*}{$b_{i,j} \in \{0,1\}$} & \multirow{2}{*}{$i = 1,...,p_{vertices}$, $j = 1,...,n$} & \multirow{2}{*}{equals $1$ if point $i$ is covered by piece $j$} \\
        &&  \\ 
        $\alpha_j \in \mathbb R^m,$ & \multirow{2}{*}{$j = 1,...,n$} & coefficients of the linear function of piece $j$ \\
        $\beta_j \in \mathbb R$ && with expression $\alpha_j x + \beta_j$ \\ 
        \multirow{2}{*}{$a^+_{j,j'} \in \mathbb R^m_+$} & \multirow{2}{*}{$j,j' = 1,...,n,j < j'$} & positive part of the normal vector for the \\
        && linear separation of pieces $j$ and $j'$ \\ 
        \multirow{2}{*}{$a^-_{j,j'} \in \mathbb R^m_+$} & \multirow{2}{*}{$j,j' = 1,...,n,j < j'$} & negative part of the normal vector for the \\
        && linear separation of pieces $j$ and $j'$ \\ 
        \multirow{2}{*}{$s^k_{j,j'} \in \{0,1\}$} & $j,j' = 1,...,n,j < j',$  &  sign of the coefficient $(a_{j,j'})_k$ of the normal \\
        & $k = 1,...,m$ & vector for the linear separation \\ 
        \multirow{2}{*}{$w_{j,j'} \in \mathbb R$} & \multirow{2}{*}{$j,j' = 1,...,n,j < j'$}  & right-hand side of the linear separation of \\
        && pieces $j$ and $j'$ \\ \bottomrule
    \end{tabular}}
\end{table}

Table \ref{MILP_parameters} describes the parameters used in the MILP model. 
Table \ref{MILP_variables} lists the variables defined in the model. It specifies all the indices, the type, and the role of the variables in the model. To make the model easier to read, we sometimes write constraints with vectors. In addition, we use the following notation:
\begin{align}
    & (a_{j,j'})_k \define (a^+_{j,j'})_k - (a^-_{j,j'})_k \quad \forall k = 1,\dots,m.
\end{align}
 The Model \eqref{model_DCFP} is as follows.

\begin{subequations} \label{model_DCFP}
\begin{align}
    & (b_{i,j} = 1) \Rightarrow l(x_i) \leq \alpha_j x_i + \beta_j \quad \forall i = 1,...,p_{vertices}, \forall j = 1,...,n, \label{MILP_respect_l} \\
    & (b_{i,j} = 1) \Rightarrow u(x_i) \geq \alpha_j x_i + \beta_j \quad \forall i = 1,...,p_{vertices}, \forall j = 1,...,n, \label{MILP_respect_u} \\
    & (b_{i,j} = 1) \Rightarrow (a_{j,j'})^T x_i \leq w_{j,j'} \quad \forall i = 1,...,p_{vertices}, \forall j,j' = 1,...,n,j < j', \label{MILP_seplin1} \\
    & (b_{i,j'} = 1) \Rightarrow (a_{j,j'})^T x_i \geq w_{j,j'} \quad \forall i = 1,...,p_{vertices}, \forall j,j' = 1,...,n,j < j', \label{MILP_seplin2} \\
    & (a^+_{j,j'})_k \leq s^k_{j,j'} \quad \forall j,j' = 1,...,n,j < j',k = 1,...,m, \label{MILP_sign1} \\
    & (a^-_{j,j'})_k \leq 1-s^k_{j,j'} \quad \forall j,j' = 1,...,n,j < j',k = 1,...,m, \label{MILP_sign2} \\
    & \sum_{k=1}^{m} (a^+_{j,j'})_k + (a^-_{j,j'})_k \geq 1 \quad \forall j,j' = 1,...,n,j < j', \label{MILP_nonzero_vector} \\
    & 0 \leq (a^+_{j,j'})_k, (a^-_{j,j'})_k \leq 1 \quad \forall j,j' = 1,...,n,j < j',k = 1,...,m, \label{MILP_domain_var_sep} \\
    & \sum_{j = 1,...,n} b_{i,j} = 1 \quad \forall i = 1,...,p_{vertices}, \label{MILP_color_each_point} \\
    & \sum_{i = 1,...,p_{vertices}} b_{i,j} \geq 1 \quad \forall j = 1,...,n, \label{MILP_color_each_piece} \\
    & \sum_{i \in e} b_{i,j} \leq |e|-1 \quad \forall e \in E, \label{MILP_edge_description} \\
    & b_{maximalClique_j,j} = 1 \quad \forall j \in [1,|maximalClique|], \label{MILP_fix_binary1} \\
    & b_{i,j} = 0 \quad \forall i \in X \setminus maximalClique_j, \forall j \in [1,|maximalClique|]. \label{MILP_fix_binary0}
\end{align}
\end{subequations}

Constraints (\ref{MILP_respect_l})-(\ref{MILP_respect_u}) are indicator constraints enforcing that if a piece $j$ covers a point $i$, i.e., if $b_{i,j} = 1$, then piece $j$ should satisfy the corridor constraints at that point. 
Similarly, Constraints (\ref{MILP_seplin1})-(\ref{MILP_seplin2}) ensure that if $b_{i,j} = 1$, then point $i$ is in the (domain of) piece $j$.
It is ensured because point $i$ is in the half-plane corresponding to piece $j$ in all linear separations of pieces $j$ and $j'$ by hyperplanes $\{x \in \mathbb R^m \;|\; (a_{j,j'})^T x = w_{j,j'} \}$.
 
Constraints (\ref{MILP_sign1})-(\ref{MILP_domain_var_sep}) ensure that $||a_{j,j'}||_1 \geq 1$ to prevent the vector $a_{j,j'}$ from being the null vector. Indeed, the linear separation requires $a$ to not be the zero vector.
Constraints (\ref{MILP_color_each_point}) ensure that each point is assigned to exactly one piece, while Constraints (\ref{MILP_color_each_piece}) force each piece to cover at least one point so that all pieces are defined.
The constraints (\ref{MILP_edge_description}) are the hypergraph coloring constraints. They prevent that all vertices of an edge $e$ of the incompatibility hypergraph are covered by the same piece. Also, we remove symmetry from the problem by fixing some binary variables $b_{i,j}$. To do that, we use the maximal clique $maximalClique$ to define Constraints \eqref{MILP_fix_binary1} and \eqref{MILP_fix_binary0}. Indeed, each of the vertices $i$ of $maximalClique$ has to be associated to a different piece $j$.

\subsubsection{Algorithm Solving Iteratively the MILP Model}
The algorithm \algmilp{} is given in Algorithm \ref{algorithm_MILP_GC} and iteratively solves the MILP model \eqref{model_DCFP} with an increasing number of vertices $p_{vertices}$ in hypergraph $H$. It produces a lower bound $\mathit{LB}$ of the \RmCFP{}.

Similarly to \algmaximum{}, each iteration of \algmilp{} starts with the computation of the number of vertices $p_{vertices}$ and the incompatibility graph $G=(X,E_2)$. Then unlike \algmaximum{}, a maximal clique of $G$ is computed by the method $\textsc{MaximalCliqueHeuristic}$ described in \ref{subsec:maximalcliqueheuristic}, and used to fix binary variables of the MILP model \eqref{model_DCFP}. If strictly greater than $\mathit{LB}$, the size of the computed clique proves a new lower bound, and the next iteration is immediately started with this new lower bound. Otherwise, a set $E_{3^+}$ of incompatibility edges of size greater or equal to 3 is computed to complete the incompatibility hypergraph $H=(X,E_2 \cup E_{3^+})$. The corresponding MILP model \eqref{model_DCFP} for $LB$ pieces is then solved with the overall remaining time as time limit. If the model is infeasible, then it is impossible to solve this instance of \RmCFP{} with $\mathit{LB}$ pieces; $\mathit{LB}$ is incremented and a new iteration starts. If the model found an optimal solution, then no conclusion can be drawn; a new iteration is started, with more points in $X$ and more incompatibility edges because $k_{optimal}$ is incremented.
Finally if the solver is stopped because of the time limit, no conclusion can be drawn and the while loop stops.
\begin{algorithm}
\textbf{Input:} \\
- the instance of \RmCFP{} $\mathcal{C}=\mathit{Corridor}(u,l,D)$ \\
- an upper bound $\mathit{UB}$ \\
- parameter $k_{repetition}$ of Algorithm \ref{algorithm_build_edge_many} \\
- coefficients $\alpha > 1, p_0, p_{add}$ modeling the evolution of the number of vertices during the algorithm \\
- the computation time limit of the algorithm time\_limit \\ 
- the number of maximal cliques $n_{maximalCliques}$ computed in the function $\textsc{MaximalCliqueHeuristic}$
\\
\textbf{Output:} a lower bound $\mathit{LB}$ of the \RmCFP{} on $\mathit{Corridor}(u,l,D)$ \\
Initialization

\begin{algorithmic}[1]
\State $\mathit{LB} \xleftarrow{} 1$
\State $k_{optimal} \xleftarrow{} 0$
\While{time remaining $> 0$ and $\mathit{LB} < \mathit{UB}$} \label{algo_R1_while}

\noindent \hspace{-0.5cm}Build the incompatibility graph

    \State $p_{vertices} \xleftarrow{} \lceil(p_0 + p_{add} \mathit{LB}) \times \alpha^{k_{optimal}}\rceil$ \label{algo_calcul_N}
    \State $X \xleftarrow{} \textsc{ComputeVertices} (\mathcal{C},p_{vertices})$ \Comment{see Section \ref{subsec:construction_vertices}} \label{algo_calcul_vertices}
    \State $E_2 \xleftarrow{} \textsc{ComputeEdgesOfSize2}(\mathcal{C},X)$ \label{algo_function_aretes2} \Comment{see Section \ref{subsec:construction_aretes_2}}

\noindent \hspace{-0.5cm}Try to tighten the lower bound without solving the MILP if possible

    \State $c \xleftarrow{} \textsc{MaximalCliqueHeuristic}(E_2,n_{maximalCliques})$ \label{algo_heuristique_clique_max} \Comment{see \ref{subsec:maximalcliqueheuristic}}
    \If{$|c| > \mathit{LB}$} \label{algo_test_taille_clique}
        \State $\mathit{LB} \xleftarrow{} |c|$ \label{algo_chgt_val_taille_clique}
    \Else

\noindent \hspace{-0.5cm}Add edges of three vertices or more to build the incompatibility hypergraph
    
        \State $E_3 \xleftarrow{} \textsc{ComputeEdgesOfSize3$^+$}$ $(\mathcal{C},X,E_2,p_{vertices},k_{repetition})$ \label{algo_aretesm} \Comment{see Section \ref{subsec:construction_aretes_multi}}

\noindent \hspace{-0.5cm}Solve the decision version of \RmCFP{} with bound LB
        
        \State $status \xleftarrow{} \textsc{MILP}(\mathcal{C},X,E_2\cup E_3,\mathit{LB},c,\text{time remaining})$ \label{ligne_MILP} \Comment{see Appendix \ref{subsubsec:MILP_model}}

\noindent \hspace{-0.5cm}Increment LB or the vertex parameter
        
        \If{$status$ is infeasible} \label{algo_status_infeasible}
            \State $\mathit{LB} \xleftarrow{} \mathit{LB}+1$ \Comment{increment LB}
        \EndIf
        \If{$status$ is optimal}
            \State $k_{optimal} \xleftarrow{} k_{optimal} + 1$ \Comment{increment the vertex parameter}
        \EndIf \label{algo_status_endif}
    \EndIf
\EndWhile
\State $\text{Return } \mathit{LB}$
\end{algorithmic}
\caption{\algmilp{} based on MILP \eqref{model_DCFP} solving \relaxRonestrong{}}
\label{algorithm_MILP_GC}
\end{algorithm}

An example of run of algorithm \algmilp{} is given below.

\subsubsection{Example of Run of \algmilp{}} \label{subsubsection_example_A1strong}
We describe a run of \algmilp{} with the instance approximating function $L_1$ with absolute approximation error $\delta=1$ and $n_{repetition}=1$.

The algorithm starts with $LB=1$, $UB=6$ and the same parameters as the one used in numerical experiments, described in Section \ref{subsection_instances_params}. The first iteration has a number of vertices $p_{vertices} = 120 (=100+20LB)$. Then, a graph coloring problem with incompatibility graph $(X,E_2)$ is built. Method \textsc{MaximalCliqueHeuristic} finds a maximal clique of 4 vertices, showing that at least 4 colors, and thus 4 linear pieces, must be used in any feasible solution of the corresponding instance of \RCFP{}. Thus, the lower bound is updated to $\mathrm{LB}=4$ and a new while iteration starts. 

The number of vertices for the second iteration is $180 (= 100+20LB)$. A new graph coloring problem is built. As method \textsc{MaximalCliqueHeuristic} finds again a maximal clique of 4 vertices, no better lower bound can be established at this point. Thus method \textsc{ComputeEdgesOfSize3$^+$} produces hyperedges ($179$ out of $180$ trials in this case) to extend the (incompatibility) graph $(X,E_2)$ to a (incompatibility) hypergraph. Then, an instance of MILP model \eqref{model_DCFP} is built and solved via the solver Gurobi. After \SI{0.04}{s}, it is proved that it admits no feasible solution, which means that the hypergraph cannot be colored by 4 colors. By construction, it means that \relaxRonestrong{} has optimal value strictly greater than 4, and that at least 5 pieces are needed in a feasible solution of the instance of \RCFP{} according to Proposition \ref{prop_relaxRone}. The lower bound is thus updated to $\mathrm{LB}=5$.

In the third iteration, the incompatibility graph has $200$ vertices and a maximal clique of 4 which does not improve the lower bound. Thus hyperedges are computed ($200$ out of $200$ trials), and the MILP is built and solved. Gurobi proves that it is infeasible after \SI{200}{s}. This proves that the lower bound is at least $6$, which is also equal to the upper bound. Thus the algorithm stops by returning the lower bound, which is the optimal value for the instance of \RCFP{}.

\section{Sampling the Domain for the $\mathbb{R}^2$-LDRFP} \label{sec:construction_S}
Let $e$ be a subset of vertices of the hypergraph $H$ with at least $3$ vertices. The set $S$ in which are enforced the corridor constraints of $\mathbb R^2$-LDRFP \eqref{eq_R2-DRFP} is constructed using $conv(e)$ as follows. Add $n^S_{edge}$ points distributed uniformly on each side of the polygon $conv(e)$, and add $n^S_{int}$ points inside $conv(e)$. The points inside $conv(e)$ are computed in the following manner. Compute the domain $[x_1^{min},x_1^{max}] \times [x_2^{min},x_2^{max}]$ with $x_i^{min}$ and $x_i^{max}$ the minimal and maximal values of $x_i \in conv(e)$, take samples as elements of a regular grid on this rectangle and count the number of samples that fall inside $conv(e)$. If the number of samples inside is at least equal to $n^S_{int}$, validate those samples. Else, increase the density of the grid, sample and count again.

\section{Improvement of Heuristics of the \RCFP{}: Implementation Details}
\subsection{Improvement of the Piecewise Linear Approximation Method of \citet{Duguet22a}} \label{section_improvement_DLN}
We detail the two improvements to the implementation of Algorithm 1 of \citet{Duguet22a}. In this work, the improved version is refered to as ``DLN'' followed by ``95'' or ``99'' depending on the value of a parameter. 

The first improvement is the change of the Julia package responsible for interval analysis operations. It has been replaced by IntervalArithmetic, resulting in significantly shorter computation times for the whole algorithm because the computations of subrectangles for the PWL corridor is one of the most time-consuming operation. This improvement affects only the computation time.

The second improvement relates to the computation of a linear piece fitting a corridor on a convex subdomain that is ``as large as possible''.
A feasible solution of the maximal piece in direction $d$ problem \citep{Duguet22a} is computed by first building a \PWL{} corridor $\mathcal{C}^{\mathrm{PWL}}$ that is inside the original corridor $\mathcal{C}$, and then by retrieving a linear function inside the corridor $\mathcal{C}^{\mathrm{PWL}}$ as the solution of a linear program. This linear program produces a valid linear function as proved in Proposition 1 of \citet{Duguet22a}. 
We now detail the computation of the \PWL{} corridor on the corridor domain.
A rectangle containing the corridor domain is divided into smaller rectangles iteratively, disregarding the subrectangles of empty intersection with the corridor domain, until all subrectangles satisfy a given approximation level $\eta \in ]0,1[$. This approximation level is called \textit{bounding efficiency} as given in Definition 15 of \citet{Duguet22a}.
The closer $\eta$ is to 1, the better the approximation level is, but also the higher the computation time is because there are more subrectangles. The extreme points of those subrectangles are then used to build the constraints of the linear program. Remark that some of those extreme points may be outside of the corridor domain because it is a general polytope in $\mathbb R^2$ and not a rectangle itself, as shown in Figure \ref{figure_subrectangles}. Thus, the constraints of the linear program coming from the extreme points outside of the corridor domain induce constraints stricter than necessary. The improvement to this process is to produce the linear program from the intersection of all the subrectangles with the corridor domain to remove the unnecessary constraints, as shown in Figure \ref{figure_subrectangles_intersected}. This choice gives more freedom for a linear function to fit the \PWL{} corridor and thus allow to produce feasible solutions of the \RCFP{} with less pieces, but is more time-consuming as there are many subrectangles to intersect at each iteration.

\begin{figure}[!ht]
\centering
\begin{subfigure}{.45\textwidth}
    \centering
    \includegraphics[width=\linewidth]{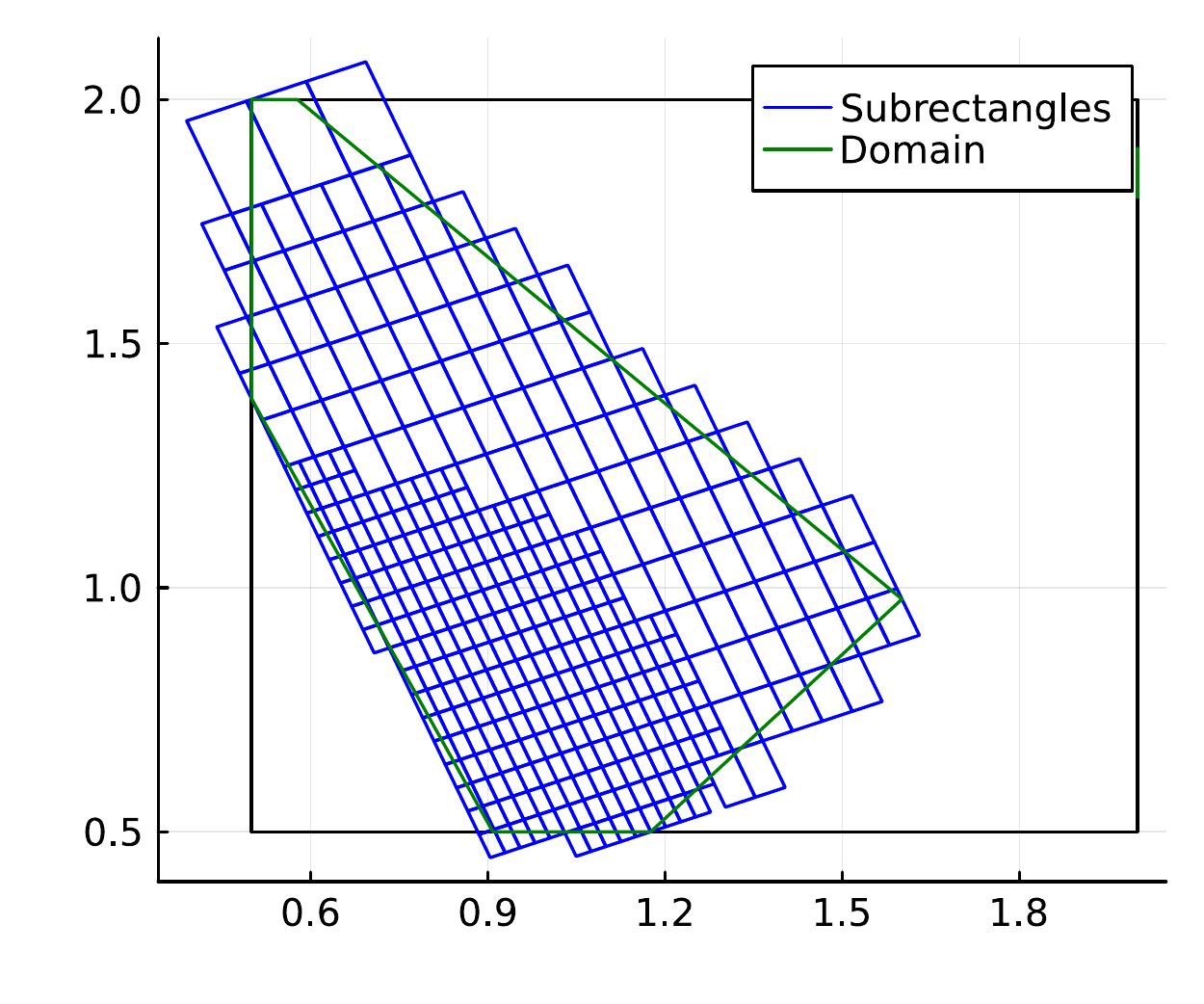}
    \caption{Old version of the PWL corridor from \citet{Duguet22a}}
    \label{figure_subrectangles}
\end{subfigure}%
\begin{subfigure}{.45\textwidth}
    \centering
    \includegraphics[width=\linewidth]{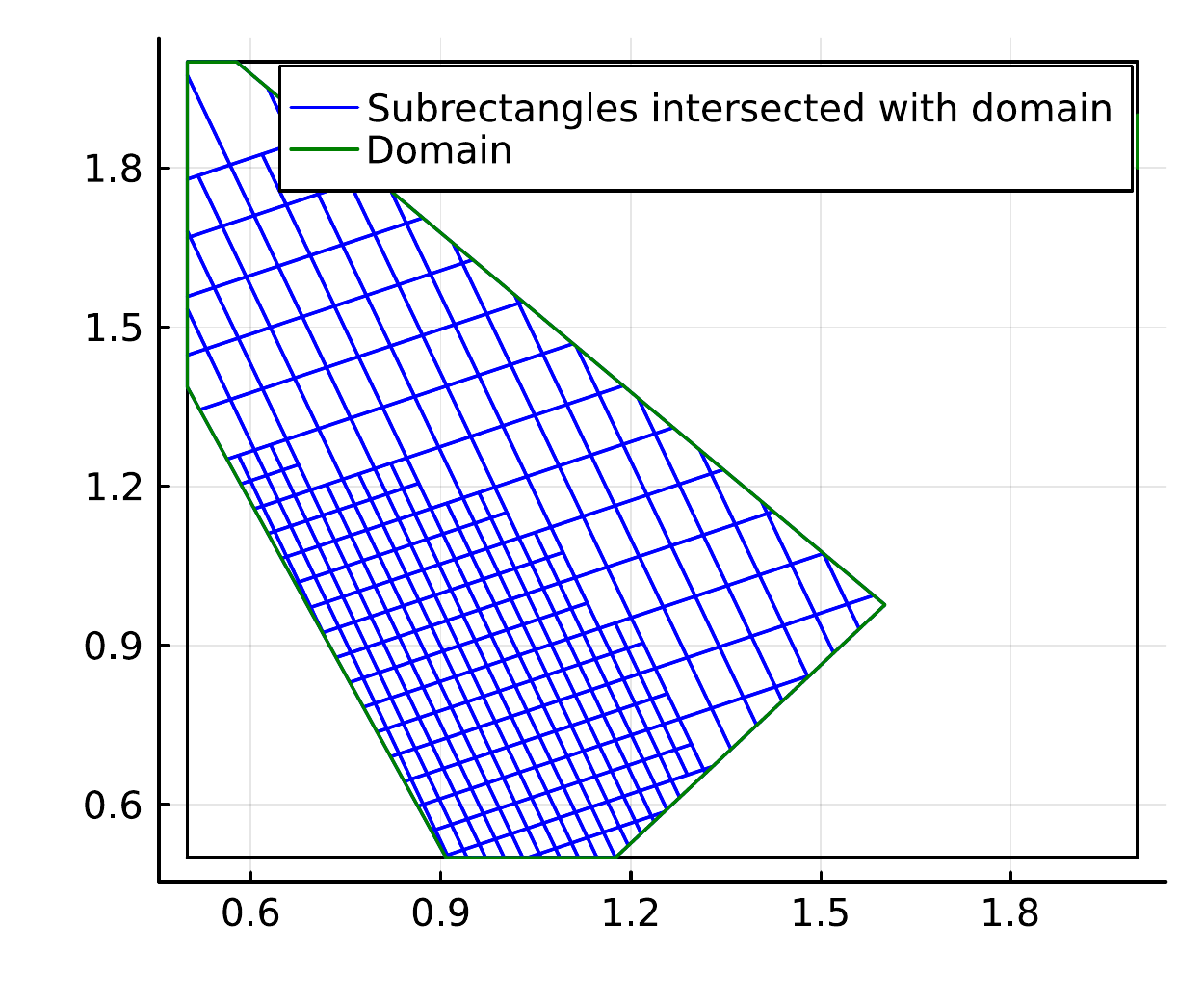}
    \caption{Improved version of the PWL corridor}
    \label{figure_subrectangles_intersected}
\end{subfigure}
\caption{Change in computing the PWL corridor for Algorithm 1 of \citet{Duguet22a}}
\label{figure_improvement_PWL_corridor}
\end{figure}

\subsection{Subroutines used in \iLinA{}: \textsc{MinNumberOfPiecesUnivariate} and \textsc{MinErrorUnivariate}}
\label{section_appendix_findminerror_minnberpieces}

These subroutines are described in Algorithms \ref{algorithm_min_nber_pieces} and \ref{algorithm_iLinA2D_detailed}.

\begin{algorithm}[!ht]
\textbf{Input:} \\
- $f$: the input one-variable function defined on domain $D=[x_{\min},x_{\max}]$\\
- $\delta$: the target precision \\
- PiecewiseLinApprox: official julia package \\ 
\textbf{Output:} $n$ the number of pieces of the piecewise linear approximation \\
Specify error type and value

\begin{algorithmic}[1]
\State err=Absolute($\delta$)
\end{algorithmic}

Compute the optimal piecewise linear approximation with the official julia package PiecewiseLinApprox

\begin{algorithmic}[1]
\setcounter{ALG@line}{1}
\State $g$=PiecewiseLinApprox.Linearize($f$, $x_{\min}$, $x_{\max}$, err, ExactLin(), bounding = Best()) %
\end{algorithmic}

Compute a feasible solution with an equally split error budget
\begin{algorithmic}[1]
\setcounter{ALG@line}{2}
\State $n$=length($g$)

\end{algorithmic}

Return the number of pieces found

\begin{algorithmic}[1]
\setcounter{ALG@line}{3}
\State $\text{Return }$ $n$
\end{algorithmic}
\caption{Algorithm \textsc{MinNumberOfPiecesUnivariate}}
\label{algorithm_min_nber_pieces}
\end{algorithm}

\begin{algorithm}[!ht]
\textbf{Input:} \\
- $f$: the input one-variable function defined on domain $D=[x_{\min},x_{\max}]$\\
- $n_{\mathrm{target}}$: the target number of pieces \\
- $[e_{\min}, e_{\max}]$: the range within which to search for the best error value (by default, $e_{\min} = 0$ and $e_{\max} = \delta$ where $\delta$ is the total error budget of \iLinA{})\\
\textbf{Output:} $e_{\mathrm{best}}$ the best error value \\
Set the numerical precision for the error value
\begin{algorithmic}[1]
\State  $\epsilon = 10^{-3}$ 
\end{algorithmic}

Verify that the best error value can be in the range provided as an input

\begin{algorithmic}[1]
\setcounter{ALG@line}{1}
\State $n_{\min}$=\textsc{MinNumberOfPiecesUnivariate}($f$, $e_{\max}$)  \Comment{see Algorithm \ref{algorithm_min_nber_pieces}}
\State $n_{\max}$=\textsc{MinNumberOfPiecesUnivariate}($f$, $e_{\min}$) 
    \If{$n_{\min} < n_{\mathrm{target}}$ or $n_{\max} > n_{\mathrm{target}}$} 
        Error !
    \EndIf
\end{algorithmic}

Narrow down the interval $[e_{\min}, e_{\max}]$ to a precision $\epsilon$

\begin{algorithmic}[1]
\setcounter{ALG@line}{5}
\While{$e_{\max} - e_{\min} \geq \epsilon$}
    \State $e_{\mathrm{tmp}} = \frac{e_{\min} + e_{\max}}{2}$
    \State $n_{\mathrm{tmp}}$=\textsc{MinNumberOfPiecesUnivariate}($f$, $e_{\mathrm{tmp}}$) 
    \If{$n_{\mathrm{tmp}} == n_{\mathrm{target}}$} 
        \State $e_{\max} = e_{\mathrm{tmp}}$
        \Else
        \State $e_{\min} = e_{\mathrm{tmp}}$
    \EndIf
\EndWhile
\State $e_{\mathrm{best}} = e_{\max} $
\end{algorithmic}

Return the best error value
\begin{algorithmic}[1]
\setcounter{ALG@line}{15}
\State $\text{Return } e_{\mathrm{best}}$ 
\end{algorithmic}
\caption{Algorithm \textsc{MinErrorUnivariate}}
\label{algorithm_iLinA2D_detailed}
\end{algorithm}

\end{document}